\theoremstyle{plain}
\newtheorem{theorem}{Theorem}
\newtheorem{proposition}[theorem]{Proposition}
\newtheorem{corollary}[theorem]{Corollary}
\newtheorem{lemma}[theorem]{Lemma}
\theoremstyle{definition}
\newtheorem{definition}[theorem]{Definition}
\newtheorem{example}[theorem]{Example}
\theoremstyle{remark}
\newtheorem{remark}[theorem]{Remark}
\definecolor{wiasblue}   {cmyk}{1.0, 0.60, 0, 0}
\definecolor{mlugreen}{RGB}{172,6,52}
\definecolor{darmstadt}{RGB}{135,206,250}
\def\E{\mathbb E}
\def\P{\mathbb P}
\def\R{\mathbb R}
\def\T{\overline{\mathbb T}}
\def\W{\mathbb T}
\def\bbT{\mathbb T}
\def\Z{\mathbb Z}
\def\mc{\mathcal}
\def\ms{\mathsf}
\def\s{\sigma}
\def\su{\subseteq}
\def\e{\varepsilon}
\def\t{\tau}
\def\g{\gamma}
\def\b{\beta}
\def\om{\omega}
\def\one{\mathbbmss{1}}
\def\De{\Delta}
\def\Ga{\Gamma}
\def\G{\mathbb G}
\def\ff{\infty}
\def\vp{\varphi}
\def\d{{\rm d}}
\def\dif{{\rm d}}
\def\NN{\mc N}
\def\PP{\mc P}
\def\Pn{\mc P_n}
\def\C{\mathbb C}
\def\CC{\mc C}
\def\DD{D}
\def\TT{\mc T}
\def\f{\frac}
\def\ua{\ms{up}}
\def\im{\item}
\def\sm{\setminus}
\def\bep{\begin{proof}}
\def\enp{\end{proof}}
\def\bepr{\begin{proposition}}
\def\enpr{\end{proposition}}
\def\bec{\begin{corollary}}
\def\enc{\end{corollary}}
\def\bea{\begin{align}}
\newcommand\eea{\end{align}}
\def\beas{\begin{align*}}
\def\eeas{\end{align*}}
\def\bet{\begin{theorem}}
\def\ent{\end{theorem}}
\def\bee{\begin{example}}
\def\ene{\end{example}}
\def\da{\ms{do}}
\def\bede{\begin{definition}}
\def\ende{\end{definition}}
\def\ber{\begin{remark}}
\def\enr{\end{remark}}
\def\beca{\begin{cases}}
\def\enca{\end{cases}}
\def\bel{\begin{lemma}}
\def\enl{\end{lemma}}
\def\been{\begin{enumerate}}
\def\enen{\end{enumerate}}
\def\beit{\begin{itemize}}
\def\enit{\end{itemize}}
\def\befr{\begin{frame}}
\def\enfr{\end{frame}}
\def\ti{\times}
\def\Var{\ms{Var}}
\def\Cov{\ms{Cov}}
\renewcommand\le{\leqslant}
\renewcommand\ge{\geqslant}
\def\X{\mathbb X}
\def\becbb{\begin{center}\begin{tcolorbox}[{colback=Dandelion!20}]}
\def\encbb{\end{tcolorbox}\end{center}}
\def\beccb{\begin{center}\begin{tcolorbox}[{colback=Dandelion!20}]}
\def\enccb{\end{tcolorbox}\end{center}}
\def\becb{\begin{center}\begin{tcbox}[{colback=Dandelion!20}]}
\def\encb{\end{tcbox}\end{center}}
\def\bef{\begin{figure}[!h]}
\def\enf{\end{figure}}
\def\betp{\begin{tikzpicture}}
\def\entp{\end{tikzpicture}}
\def\co{\colon}
\def\endo{
\usepackage{cancel}
\usepackage{fullpage}
\renewcommand{\labelenumi}{\alph{enumi})}
\begin{document}

\title{Normal approximation for subgraph counts in age-dependent random connection models}

\author{Christian Hirsch}
\address[Christian Hirsch]{Department of Mathematics\\ Aarhus University \\  Ny Munkegade, 118, 8000, Aarhus C,  Denmark.}
\email{hirsch@math.au.dk}
\author{Takashi Owada}
\address[Takashi Owada]{Department of Statistics, Purdue University, West Lafayette, 47907, USA}
\email{owada@purdue.edu}
\author{Raphaël Lachièze-Rey}
\address[Raphaël Lachièze-Rey]{INRIA \\ Paris, France.}
\email{raphael.lachieze-rey@math.cnrs.fr}

\begin{abstract}
	In this paper, we study the topic of asymptotic normality of certain subgraph counts in a model of spatial scale-free random networks known as \emph{age-dependent random connection model}. More precisely, in the light-tailed regime where the subtree counts centered at a typical vertex have finite second moments, we prove the asymptotic normality of clique- and subtree-counts {when the limit is not stable}. In the case of the clique-counts, we establish a multivariate quantitative convergence through the Malliavin-Stein method. In the more delicate case of subtree-counts, we obtain the distributional convergence to a normal distribution by using a classical central limit theorem for sequences of associated random variables.
\end{abstract}
\maketitle

\section{Introduction}
\label{sec:intro}
%
%
Recently, several variants of kernel-based or weighted \emph{random connection models (RCMs)} have received substantial research interest \cite{glm2,komjathy2,glm,komjathy}. This is because they can reproduce the heavy-tailed degree distributions observed in many real-world complex networks, while simultaneously retaining a positive clustering coefficient. This latter property sets  them apart from standard discrete models for complex networks such as the Chung-Lu or the Norros-Reittu model \cite{ChungLu,NorrosReittu}. Other popular alternatives for obtaining a positive clustering coefficient include the spatial preferential attachment models 
\cite{jacMor1,jacMor2}. While these networks offer a natural interpretation of a growing network, their time-dependent construction makes them far less tractable than the weighted RCMs. Another highly studied class of spatial scale-free networks is the hyperbolic random geometric graph from \cite{gpp}{, where the non-Euclidean geometry
can pose technical difficulties. Such hyperbolic networks actually have Euclidean RCM counterparts, in the sense that these networks have the same local limits as some RCMs (\cite[Theorem 9.33]{book-hof}),
but it is not clear if local limits are sufficient to capture many interesting network properties such as the asymptotic normality of subgraph counts.}

%
%
In this paper, we consider a popular example for such weighted RCMs, namely the \emph{age-dependent random connection model (ADRCM)} \cite{glm2,komjathy2,glm,komjathy}. Here, nodes are placed in a metric space and endowed with i.i.d.~weights, and the connection probability between two nodes is a function of both the distance and their respective weights{, in such a way that the typical degree has a power law}. The main achievements of our paper are normal approximation results for  clique and subtree counts in the ADRCM. 
Our motivation is to investigate the limiting behavior of such networks in large domains, and such normal approximation results would statistically validate the use of the ADRCM as a model for real data. For example, with the help of the obtained asymptotic normality, we could derive asymptotic confidence intervals for such networks as the basis for various hypothesis tests. Even though computations and regimes may not be identical to those for other RCMs, many of the methods employed here have a potential to be deployed in other models, where the local number of subgraphs is square integrable.

Central limit theorems (CLTs) in spatial random systems have become a vibrant research stream in the last twenty years, from the martingale-based techniques of  \cite{barysh,yukCLT,Yu15} to the Malliavin-Stein method which allowed to obtain second-order Poincaré inequalities and likely optimal rates of convergence \cite{mehler,ustat,y2,y1,LrP17}  for related works in this direction. However, these results traditionnally require uniformly bounded fourth moments, which  is not compatible with the heavy-tailed behavior of the ADRCM. We exploit here a  recent update of the Malliavin-Stein method from \cite{trauth,trauth2} that allows  to deal with functionals where only moments of order $(2+\varepsilon)$ are needed, and  \cite[Theorems 4.1 and 4.8]{oliveira:2012} for associated random variables.

The main contributions of this paper are as follows:
\begin{enumerate}
	\item For clique counts, we derive the limiting variance and prove   quantitative normal approximation using the Malliavin-Stein bounds from \cite{trauth, trauth2}. This analysis allows us to study the multivariate case as well, where several clique counts of various dimensions are considered simultaneously.
	\item For subtree counts, we give the variance magnitude and show the qualitative normal approximation based on the CLTs for associated random variables from Theorems 4.1 and 4.8 in \cite{oliveira:2012}.
\end{enumerate}
One of the most challenging issue is to deal with the lack of integer powers of the quantities of interest, hence we  carry out a detailed configurational analysis of both clique and subtree counts, in order  to deduce the required fractional moment bounds. Additionally,  we  need to carry out their detailed second-order analysis.

In the ADRCM, the subgraph counting statistics follow a wide range of limiting behaviors from Gaussian to stable laws. In our companion paper  \cite{ht24}, we established stable limit theorems for both clique and subtree counts in the regime where these statistics have \emph{infinite}  second moments. In contrast, the present work focuses on proving normal approximation results for the same statistics under parameter regimes     of  \emph{finite} second moments. By combining the results of \cite{ht24} with those of the present work, we obtain a full picture of a phase transition from Gaussian to stable laws.   
This phase transition depends   critically on the parameter regime of the ADRCM,  and such subtle dependence becomes even more pronounced for  general subgraph counts. Indeed, unlike classical random geometric graphs (see, e.g., the monograph~\cite{penrose}),   the situation for the ADRCM is far more delicate due to the heavy-tailed phenomena induced by the associated weights. As a result, a unified treatment of CLTs, as well as stable limit theorems, for arbitrary subgraph counts, still  remains   out of reach. 

Nevertheless, the results of this paper  mark a   crucial step toward understanding the behavior of fundamental statistics in network analysis. One such example is the \emph{clustering coefficient} considered in \cite{jacMor1, pim}. This index measures how well a model resembles  an actual network, and is typically   defined as the ratio between wedge counts (a special case of subtree counts) and triangle counts (a special case of clique counts). It is known from \cite{jacMor1, pim} that the clustering coefficient in the ADRCM is  asymptotically positive. 
By combining \cite{ht24} and the current work, we are expected to develop a unified theory for clustering coefficients, regardless of whether the underlying statistics have finite or infinite second moments.  For example, in the light-tailed regime with finite second moments, we conjecture that    the normalized clustering coefficient may  be approximated by the ratio of components of a Gaussian vector, 
taking the form $a + b Z_1/Z_2$ (Cauchy distribution), where $Z_1$ and $Z_2$ are i.i.d.~normal random variables.  Further investigation of the clustering coefficient is left for future research.

The rest of the manuscript is organized as follows. In Section \ref{sec:model}, we introduce the model and state our main results. To provide the reader with a broad overview,  proof outlines are given in Section \ref{sec:outline}. A key component in the clique case will be variance and covariance asymptotics, which  are presented in Section \ref{sec:mom}. The remaining arguments for the clique CLT are provided in Section \ref{sec:cliq}. Finally, the subtree case is treated in Section \ref{sec:tree}. We conclude with the Appendix containing  more technical computations.

\section{Model and main results}
\label{sec:model}

We review  the ADRCM from \cite{glm2,glm}. Let $n\ge 1$, 
$0< \g < 1$, and $ \PP_n =(P_i)_{i \ge 1} = \big((X_i,U_i)\big)_{i\ge1} \su \T_n :=\W_n \ti [0, 1]$ be a unit-intensity Poisson point process, where $\W_n := [-n/2,n/2]^d/\sim$ represents the $d$-dimensional torus.  Given $P=(X,U)\in \PP_n$, 
we interpret $X$ as the spatial (or location) coordinate and $U$ as the associated {birth time}, or \emph{ mark},  of $X$. 

We connect two nodes $P = (X, U)$ and $Q = (Y, V)$ with  $V > U$, in symbols $Q \to P$, with probability  
$$\P(Q\to P)=\varphi(|X - Y|^d  U^\g V^{1-\g}/ \b),$$ $\b >0$,
where $|\cdot |$ is the Euclidean norm and  $\varphi:\R_+\to \R_+$ is a  non-increasing and integrable profile function. 
To ease our computations,  we  always consider the  case $d = 1$ and the hard profile function $\vp(r) = \one\{r\le 1\}$, and sometimes $\b=1$.
The resulting graph is known as the \emph{age-dependent random connection model} (ADRCM) and is denoted by $\G_{n, \g, \b}$. 
Sometimes, we think of $\G_{n, \g, \b}$ as a directed graph by  interpreting an edge between $P=(X,U)$ and $Q=(Y,V)$ with $V > U$ as being directed from the younger node $Q$ to the older node $P$.

\subsection{CLT for clique counts}
We begin with introducing some notation. For $k\ge1$,  let 
\begin{equation}  \label{e:def.clique.counts}
\CC_{n, k}:= \big|\{\De \su \PP_n\co |\De| = k, \, p\leftrightarrow q \text{ for all $p, q \in \De$} \big\}\big|,
\end{equation}
be the number of $k$-cliques where $ \leftrightarrow$ means   the point with  higher mark (i.e., larger time coordinate) is connected to the  point of  lower mark (i.e., smaller time coordinate).  In the regime $0 < \g < 1/2$, 
 {a point $(x,u) \in \widebar \bbT_n$ is less likely to connect to   other nodes of higher marks,  even when $u$ is small.    Consequently, a CLT holds for \eqref{e:def.clique.counts}, as shown in Theorem~\ref{thm:cliq} below. 
In contrast, when $\g$ is larger, i.e., $1/2 < \g < 1$, the number of connections to $(x,u) \in \widebar \bbT_n$ increases significantly, even for small $u$, which yields a stable limit theorem for \eqref{e:def.clique.counts}, as proved in \cite{ht24}.

Recall that the Wasserstein distance between two integrable random variables $X,Y$ is defined as 
$$
d_{\ms W}(X,Y)=\sup_{h\in \text{Lip}_1}\big| \E [h(X)]-\E [h(Y)]\big|,$$
where $\text{Lip}_1$ is the space of $1$-Lipschitz functions $h:\mathbb R\to \mathbb R.$ 
%
%
\bet
\label{thm:cliq}
 Let $0<\g < 1/2$, $k_0 \ge 1$ and $\eta \in (1,2)$ be such that $\eta (2\g \vee ( 1-\g)) < 1$. 
\been 
\im{\bf Univariate CLT.} 
For every $1\le k \le k_0$, it holds that 
\begin{equation}  \label{e:d_W.conv}
	d_{\ms W}\Big(\widetilde{\CC_{n,k}}, \NN(0, 1)\Big)   \in O\big(n^{-(\eta - 1)/2}\big),
\end{equation}
where $\widetilde{\CC_{n, k}}:=(\CC_{n, k} - \E[\CC_{n, k}])/{\sqrt{\Var(\CC_{n, k})}}$, $\mathcal N$ is a normal distribution. Moreover, for some $\sigma_k\in (0,\infty)$, we have 
$$
\big|  n^{-1}\Var(\CC_{n, k}) - \sigma_{k} \big| \in O(n^{2\g-1}). 
$$
\im{\bf Multivariate CLT.} There exists a semi-definite positive matrix $\Sigma=(\sigma_{k,\ell})$, such that 
\begin{equation}  \label{e:d_3.conv}
	d_3\Big(\big( \widetilde{\CC_{n,k}}\big)_{k \le k_0}, \NN(0, \Sigma)\Big)   \in O\big(n^{-(\zeta(\g) \wedge ((\eta - 1)/2))}\big),
\end{equation}
where   $d_3$ is the distance function on random vectors based on $\mathcal C^3$-test functions  defined  in \cite[Equation (2.3)]{trauth2} and $\zeta(\g) := (1- \g) \wedge (1-2\g)/\g$. Moreover, $\sigma_{k,\ell}$ is explicitly defined later in Proposition \ref{pr:lvarcliq}, and satisfies $\sigma_{k, k}>0$. For every $k, \ell \in \{ 1,\dots,k_0 \}$, it holds that 
$$\big|  n^{-1} \Cov(\CC_{n, k}, \CC_{n, \ell}) - \sigma_{k, \ell} \big| \in O(n^{-\zeta(\g)}). $$
In particular, \eqref{e:d_3.conv} implies that $\big( \widetilde{\CC_{n, k}}\big)_{k \le k_0} \xrightarrow{d} \NN(0, \Sigma)$ as $n\to\infty$, where $\xrightarrow{d}$ is convergence in law.  
\enen
\ent
In Theorem \ref{thm:cliq} we do not  attempt to optimize the rate of convergence because    our goal is to develop arguments  applicable for all $0<\g < 1/2$. We believe, however, that a more refined analysis, potentially yielding sharper rates, is feasible in the more restrictive regime $0 < \g < 1/4$.

%
%
\subsection{CLT for subtree counts}
Next, we deal with  subtree counts, where we often identify a directed tree with its edge set. While an (unoriented) clique is parameterized fully by the number of nodes only, the oriented nature of the underlying tree offers a wide variety of different configurations which lead to different asymptotic behaviours. We henceforth characterize different types of trees by fixing a  vertex set $\{1, \dots, m\}$ and embedding a directed  tree structure $\ms T$ on this vertex set. Then, 
\begin{equation}  \label{e:subtree.count}
\TT_{n, \ms T} := \big|\big\{\{P_1, \dots, P_m\} \su \PP_n\co P_i\to  P_j \text{ for all edges } \{i,j\} \text{ in } \ms{T}  \text{ with } i \to j\big\}\big|
\end{equation}
is the number of copies of $\ms T$. More precisely, it represents the number of injective homomorphisms from an abstract directed tree $\ms{T}$ into $\G_{n, \g, \b}$. 
Since the connectivity patterns of general trees are more delicate than in the clique case, the analysis required to obtain various quantitative bounds becomes significantly more complex. For this reason, we do not attempt to deduce an explicit rate of convergence.

For us, it is convenient to reformulate \eqref{e:subtree.count} as follows. First, let us  endow $ \ms T$ with a root $ \ms r$, and for $(x,u) \in \widebar \bbT_n$, define $\inDt(x,u)$ as the number of {copies of $ \ms T$} such that the root $\ms{r}$ of $\ms{T}$ is mapped to $(x,u)$. We write   $\inDt(u) := \inDt(0,u)$. 
Now, using this notation, \eqref{e:subtree.count} can  be rewritten as 
\begin{equation} \label{e:def.sub-tree.counts}
	\mc T_{n, \ms T} = \sum_{P \in \mP_n} \inDt(P),
\end{equation}
where the roots   of subtrees in $\G_{n, \gamma, \beta}$ are drawn from $\Pn$. It is important to   emphasize that $\inDt(P)$ enumerates graph homomorphisms; hence, \eqref{e:def.sub-tree.counts} does not necessarily count \emph{induced} subtrees in $\G_{n,\g,\b}$. Figure \ref{fig:T}    gives a simple example of wedge counts.

\begin{figure}[h!] 

\begin{center}
    \begin{tikzpicture}
        \node[anchor=center] (img) at (0,0) {\includegraphics[width=6cm]{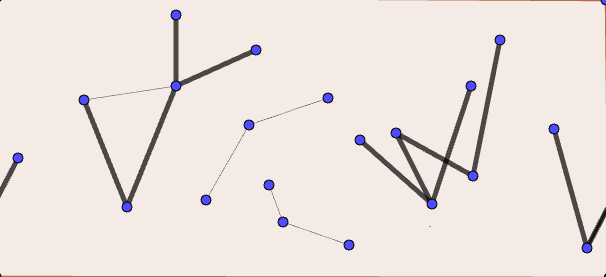}};


        \node at (-3.3, 1.3) { $1$};
        \node at (-3.3, 0) { $u$};
        \node at (-3.3, -1.3) {$0$};

        \node at (0,-1.6) {$\mathbb{T}_n$};
        \node at (-3, -1.6) {$-n/2$}; 
        \node at (3, -1.6) {$n/2$};

    \end{tikzpicture}
\caption{Example of directed wedge   trees of two leaves (i.e., $\ell=2$), where two vertices are connected to a root with a   lower mark. The     quantity $\mathcal T_{n,\ms{T}}$ counts only the ``visualized'' rooted wedges   drawn by solid lines. Note that     each such rooted wedge admits two isomorphisms, and therefore we have $\mathcal T_{n,\ms{T}} = 14$. }
 \label{fig:T}

\end{center}

 \end{figure}

Our second main result is a CLT for the subtree counts \eqref{e:def.sub-tree.counts} in the regime $0 < \g < 1/(2\ell)$, where $\ell$ denotes the number of leaves in $\ms T$ (with no further children). Notice that the  threshold value $1/(2\ell)$ is optimal in the sense that stable limit theorems for \eqref{e:def.sub-tree.counts} have been established when $1/(2\ell) < \g < 1$ (see \cite{ht24}).

%
\begin{theorem} \label{t:CLT.sub-tree}
Given  a directed tree $\ms{T}$, assume that $0< \gamma < 1/(2\ell)$. Then, we have  {$c_-n \leq \text{Var}(\mc T_{n, \ms T})\leq c_+n$} for some $0<c_-<c_+<\infty$,  and
$$\f{\mc T_{n, \ms T}-\E[\mc T_{n, \ms T}]}{\sqrt{\ms{Var}(\mc T_{n, \ms T})}} \Rightarrow \mathcal N(0,1), \ \ \ n\to\infty. $$
\end{theorem}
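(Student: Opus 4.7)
The plan is to realize $\mc T_{n, \ms T}$ as a sum over unit cells of the torus and apply the central limit theorem for stationary positively associated sequences from \cite{oliveira:2012}. Concretely, partition $\W_n$ into $n$ unit intervals $I_1,\dots,I_n$ and set
\[
Z_i := \sum_{P = (X,U) \in \PP_n,\, X \in I_i} \inDt(P),
\]
so that $\mc T_{n, \ms T} = \sum_{i=1}^n Z_i$. Since $\inDt(P)$ is an increasing functional of $\PP_n$ --- adding a Poisson point cannot destroy any injective homomorphism of $\ms T$ --- each $Z_i$ is increasing in $\PP_n$; the Poisson FKG inequality then makes the family $(Z_i)_{i=1}^n$ positively associated, and stationary under the toroidal shift.

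The second step is the variance bound $c_- n \le \var(\mc T_{n, \ms T}) \le c_+ n$. A direct configurational argument yields $\E[\inDt(u)^2] \le C u^{-2\g\ell}$ (up to logarithmic factors), which is integrable over $u \in (0,1)$ exactly because $\g < 1/(2\ell)$. Combined with the Mecke--Campbell formula, the diagonal contribution --- where the two copies of $\ms T$ share their root --- has size $n \cdot \int_0^1 \E[\inDt(u)^2]\,\dif u$ and delivers the lower bound. The upper bound requires handling covariances between copies rooted at distinct Poisson points; these we enumerate by the pattern of shared internal vertices, each term controlled by fractional moments of $\inDt$ and ultimately of order $n$.

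To invoke \cite[Theorems 4.1 and 4.8]{oliveira:2012}, we then need a uniform $(2+\delta)$-moment bound on the $Z_i$'s together with Newman-type summability of $\Cov(Z_0, Z_j)$ in $j$. Fix $\delta > 0$ with $(2+\delta)\g\ell < 1$: the analogous estimate $\E[\inDt(u)^{2+\delta}] \le C u^{-(2+\delta)\g\ell}$ then gives $\sup_n \max_i \E[|Z_i|^{2+\delta}] < \infty$. The covariance summability follows from the same configurational analysis used for the variance upper bound, once the covariance between two cells at spatial separation $r$ is shown to decay fast enough in $r$.

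The main obstacle is the off-diagonal analysis underlying both the variance upper bound and the Newman condition. A single very old (small-mark) Poisson point can sit in subtree copies rooted across many far-apart cells, creating long-range correlations that would break Newman's criterion if handled naively. The remedy is to stratify copies by the minimum mark of their vertices and bound the resulting integrals via the same fractional moment estimates as in the diagonal step; this delivers polynomial decay of $\Cov(Z_0, Z_j)$ in $|j|$, after which \cite[Theorems 4.1 and 4.8]{oliveira:2012} yield $(\mc T_{n, \ms T} - \E[\mc T_{n, \ms T}])/\sqrt{\var(\mc T_{n, \ms T})} \Rightarrow \mathcal N(0,1)$.
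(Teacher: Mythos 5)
Your strategy matches the paper's: decompose $\mc T_{n,\ms T}$ into unit-cell sums $T_i^{(n)}$, establish positive association via Harris--FKG, show decay of $\Cov(T_1^{(n)},T_k^{(n)})$ and a $(2+\delta)$-moment bound, and conclude via a CLT for associated random variables in the spirit of Oliveira's Theorems 4.1 and 4.8. But two genuine gaps separate the sketch from a proof.

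First, the covariance decay --- which you rightly identify as ``the main obstacle'' --- is precisely the content that cannot be deferred. You assert ``polynomial decay of $\Cov(Z_0, Z_j)$ in $|j|$'' and propose to ``stratify copies by the minimum mark.'' What is actually needed is the explicit rate $O(k^{-(2-2\ell\gamma)})$, and this exponent sits exactly at the summability threshold: only $\gamma < 1/(2\ell)$ makes $2 - 2\ell\gamma > 1$, so the constant in front matters and so does the exponent. The paper's derivation (its Lemma \ref{l:cov.var.bdd.tree}$(i)$) does not merely stratify by minimum mark. It parametrizes the overlap by the two directed paths $(z_0,w_0)\to\cdots\to(x,u)$ and $(z_0,w_0)\to\cdots\to(y,v)$ running from the shared vertex to each root, observes that a spatial separation $\gtrsim k$ forces at least one edge in this concatenated path to be long, and then runs a separate integral computation for each possible location of the long edge (Cases (I)--(V) in the paper), each time extracting a factor of $k^{-1}$ or $s(u,k)$ from the edge-length constraint and using the combinatorial identities $2\ell = p + p' + \ell_0 + \sum_i\ell_i + \sum_j\ell_j'$ (or its variants) to land on the exponent $2-2\ell\gamma$. ``Minimum mark'' stratification alone does not localize where the long edge sits and hence does not yield the correct rate.

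Second, you propose to \emph{invoke} \cite[Theorems 4.1 and 4.8]{oliveira:2012} directly. Those theorems address a fixed stationary sequence, whereas here $(T_i^{(n)})_{i\le n}$ is a triangular array whose marginal law depends on $n$ (through the torus). The paper therefore re-runs the argument by hand: it groups the $T_i^{(n)}$ into blocks of size $\ell_n\to\infty$ with $\ell_n=o(n)$, applies Newman's characteristic-function inequality to decouple blocks, and verifies a Lindeberg condition for the i.i.d.-ized blocks --- and the Lindeberg step is what forces the constraint $\ell_n = o(n^{\delta/(2(4+3\delta))})$ in combination with the $(2+\delta)$-moment bound. Relatedly, your claimed bound $\E[\inDt(u)^{2+\delta}] \le C u^{-(2+\delta)\gamma\ell}$ is plausible but would need the same type of configurational work as the third-moment bound in Proposition \ref{p:exp.var.sub-tree}$(iii)$; the paper in fact goes through the integer third moment and interpolates. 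So the outline is sound, but both the covariance decay and the adaptation of the associated-variables CLT to arrays require substantive arguments that are missing.
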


\begin{remark}[Higher dimensions and non-compactly supported profile functions]
	\label{rem:dim}
While Theorems \ref{thm:cliq} and \ref{t:CLT.sub-tree} are proved only  in dimension $d=1$, there is no principal obstruction to a generalization to higher-dimensions. However, since the one-dimensional setting already relies on heavy computations, carrying out those calculations in higher-dimensions would make our analyses far less accessible without gaining much additional mathematical insights. 
Moreover, extending our results to profile functions $\vp$ of unbounded  support would be substantially more involved. This is because for compactly supported profile functions, the size of joint neighborhood of nodes can be accurately bounded via a simple application of the triangle inequality. 
\end{remark}

%
%
\section{Proof outline}
\label{sec:outline}
In this section, we give the outlines for the proofs of Theorems \ref{thm:cliq} and \ref{t:CLT.sub-tree}. These two results are shown using different techniques, so we treat them in Sections \ref{ssec:cliq} and \ref{ssec:tree}, separately.

%
%
\subsection{Proof outline for Theorem \ref{thm:cliq}}
\label{ssec:cliq}
First, we give a broad overview of the proof of Theorem \ref{thm:cliq}; the detailed arguments are provided in the subsequent proof sections.  
Our derivation of the multivariate quantitative   CLTs in \eqref{e:d_3.conv} is based on \cite[Theorem 1]{trauth2}, while the univariate version in \eqref{e:d_W.conv} uses \cite[Theorem 3.4]{trauth}.  Both results are suitably adapted to handle random variables whose fourth     moments are not uniformly bounded.

%
%
We begin with recalling the general setup. For a general Poisson functional $F:\X\to \mathbb R$ on a state space $\X$, we introduce the first- and second-order Malliavin derivatives: for $q\in \X$ with $\P(q\in\PP)=0$,  
$$
D_q F(\PP):=F(\PP\cup\{q\})-F(\PP),
$$
and for $p,q\in\X$,
$$D_{p,q}F(\PP):=D_p\big(D_q F(\PP)\big)=F(\PP\cup\{p,q\})-F(\PP\cup\{p\})-F(\PP\cup\{q\})+F(\PP),$$
which is  obviously symmetric in $p$ and $q$. 
In our case, the Poisson process is $\mP_n$, with unit intensity on  the state space $\X=\T_n$ with respect to Lebesgue measure, and the functional $F$ is chosen as the clique count, i.e., $F (\mP_n):= \CC_{n, k}$.  For simplicity, we write $\MalD_u := \MalD_{(0, u)}$ and  $\MalD_{u, q} := \MalD_{(0, u), q}$ for $u\in (0,1)$ and $q\in \widebar \bbT_n$.
In particular, note that 
\been
\im $D_q \CC_{n,k} $ (resp.~$D_u \CC_{n,k}$) is the number of $k$-cliques containing $q$ (resp.~$(0,u)$), 
\im $D_{u,q}\CC_{n,k} $ is the number of   $k$-cliques  containing $(0,u)$ and $q$. 
\enen
For a fixed $\eta \in (1, 2)$, we need to prove  that the following quantities are  of at most volume order $O(|\T_n|)=O(n)$: for $p, q\in \widebar \bbT_n$, 
%
%

\begin{align*}
	&\Ga_1 :=   \sum_{k,\ell\le k_0}\int_{\T_n} \left(\int_{\T_n} \E\left[(\MalD_q\CC_{n, k})^{2\eta}\right]^{1/({2\eta})}  \E\left[(\MalD_{p,q} \CC_{n, \ell})^{2\eta}\right]^{1/({2\eta})} \d q \right)^\eta \d p, \\
	&\Ga_2 :=   \sum_{k,\ell\le k_0} \int_{\T_n} \left(\int_{\T_n} \E\left[(\MalD_{p,q} \CC_{n, k})^{2\eta} \right]^{1/{2\eta}}\E\left[(\MalD_{p,q} \CC_{n, \ell})^{(2\eta)} \right]^{1/{(2\eta)}} \d q \right)^\eta \d p, \\
	&\Ga_3 :=  \sum_{k,\ell\le k_0}\int_{\T_n} \E\big[\left(\MalD_p\CC_{n, k} \right)^{\eta + 1}\big]^{1/(\eta + 1)} \E\big[\left(\MalD_p\CC_{n, \ell} \right)^{\eta + 1}\big]^{1-1/(\eta + 1)}\d  p.
\end{align*} By the invariance under torus translations, these quantities can be simplified as $\Ga_i = |\T_n| \Ga_i'=n \Ga_i'$, where
\begin{align}
	 \label{Ga1}\Ga_1' &:=   \sum_{k,\ell\le k_0}\int_{0}^1 \left(\int_{\T_n} \E\left[(\MalD_q\CC_{n, k})^{2\eta}\right]^{1/(2\eta)}  \E\left[(\MalD_{u,q} \CC_{n, \ell})^{2\eta}\right]^{1/(2\eta)} \d q \right)^\eta \d u, \\
	\label{Ga2}\Ga_2' &:=   \sum_{k,\ell\le k_0} \int_0^1 \left(\int_{\T_n} \E\left[(\MalD_{u,q} \CC_{n, k})^{2\eta} \right]^{1/{2\eta}}\E\left[(\MalD_{u,q} \CC_{n, \ell})^{(2\eta)} \right]^{1/{(2\eta)}} \d q \right)^\eta \d u, \\
	\label{Ga3}\Ga_3' &:=  \sum_{k,\ell\le k_0}\int_0^1 \E\big[\left(\MalD_{ u}\CC_{n, k} \right)^{\eta  + 1}\big]^{1/(\eta  + 1)} \E\big[\left(\MalD_{u}\CC_{n, \ell} \right)^{\eta + 1 }\big]^{1-1/(\eta + 1)}\d u.
\end{align}
After taking the exponent $1/\eta$ in \eqref{Ga1}--\eqref{Ga3}, we can completely recover the quantities in \cite[Theorem 1]{trauth2}.

To bound them, we must  control $(2 + \e)$th moment of the first- and second-order difference operators for small $\e>0$. 
In addition to controlling the terms $\Ga_i'$, $i=1,2,3$,
 we also need to show that the variance of clique counts grows at least linearly in the system size.  Moreover, for the multivariate setting, it is necessary to determine the exact growth rate of covariances as well. 

To make our approach more precise, let $\PP$ be a  Poisson point process on $\RR := \R \times [0, 1]$ {with Lebesgue intensity}. Then,  given  $k, \ell \ge 1$, $p, q \in \RR$, and the ADRCM defined  on $\PP\cup\{p\}$, we let $\CC_k(p)$
 be the number of $k$-cliques \emph{centered} at $p$, by which we mean the number of $k$-cliques with $p$ as its lowest marked vertex. On $\PP \cup \{p, q\}$, let  $\CC_{k, \ell}(p, q)$ denote the number of point configurations giving rise to a $k$-clique centered at $p$ and an $\ell$-clique centered at $q$, such that these two cliques have at least one common point. A common point could  be $p$ or $q$ themselves. To ease notation, for $u \in (0, 1)$ and $q\in \RR$,  we put $\CC_k(u):=\CC_k((0,u))$ and $\CC_{k, \ell}(u, q):= \CC_{k, \ell}((0, u), q)$.

%
%
\bepr[Covariance asymptotics] 
\label{pr:lvarcliq}
Let $0<\g < 1/2$ and $k,\ell\in \{ 1,\dots,k_0 \}$. Then, 
$$
\big| n^{-1} \ms{Cov}(\CC_{n, k}, \CC_{n, \ell}) - \sigma_{k,\ell} \big| \in O(n^{-\zeta(\g)}), 
$$
where 
	$\s_{k, \ell} := \int_0^1 \E\big[\CC_{k}(u) \CC_{\ell}(u)\big] \d u +	\int_0^1 \int_{\RR}\E\big[\CC_{k, \ell}(u, q)\big] \d q\d u\in [0,\infty). 
	$ {It is in particular easily established that $\sigma_{k,k}>0$.}
\enpr

%
%
\bepr[Bounds for $\Ga_i$]
\label{pr:gisu}
Let $0<\g < 1/2$ and $\eta \in (1,2)$ be such that $\eta (2\g \vee (1 - \g))< 1$. Then $\Ga_1' \in O((\log n)^\eta)$ and
$ \Ga_2' + \Ga_3'\in O(1)$.
\enpr
Once these results are established,  Theorem \ref{thm:cliq} follows directly from \cite[Theorem 3.4]{trauth} and  \cite[Theorem 1]{trauth2}. {In the univariate case, the exponent is optimal using this strategy as we cannot make better than proving that the $\Gamma'_i$'s are bounded in Proposition \ref{pr:gisu}.}

{
\begin{remark}[Kolmogorov distance] For statistical applications, it is sometimes relevant to use Kolmogorov's distance between two random variables $$d_{\ms K}(X,Y)=\sup_{t\in \mathbb{R}}|\P(X\leq t)-\P(Y\leq t)|.$$
In the univariate case, \cite[Theorem 3.4]{trauth} also gives bounds on $d_{\ms K}\Big(\widetilde{\CC_{n,k}}, \NN(0, 1)\Big)$ in terms of quantities $\gamma_4,...,\gamma_7$, which are related to estimates on first- and second-order derivatives of the functionals of interest (quantities $\gamma_1,\gamma_2,\gamma_3$ bound Wasserstein distance). At the cost of some additional computations, it is likely that we could obtain  Kolmogorov bounds using this route. 
\end{remark}}

 %
 %
 \subsection{Proof outline for Theorem \ref{t:CLT.sub-tree}}
 \label{ssec:tree} 
Theorem \ref{t:CLT.sub-tree} is proved using the CLTs for associated random variables from Theorems 4.1 and 4.8 in \cite{oliveira:2012}. We now provide an outline of the proof.
To begin, we decompose $\mc T_{n, T}$ in \eqref{e:def.sub-tree.counts} based on the spatial coordinates of $P \in \Pn$: 
$$
\mc T_{n, T} = \sum_{i=1}^n \sum_{P \in \mP_n \cap ([i-1,i) \times [0,1])} \inDt(P) =: \sum_{i=1}^n T_i^{(n)}. 
$$
Due to the cyclic structure of the torus, the random variables $(T_i^{(n)})_{i=1}^n$ are identically distributed though not independent. Moreover,  $(T_i^{(n)})_{i=1}^n$ is \emph{positively associated}, which means that for every $k \ge 1$ and any two {bounded}  functions $f_1, f_2: \R^k \to \R$ that are non-decreasing in the coordinatewise sense, we have 
\begin{equation}  \label{e:pos.cov}
\ms{Cov} \big( f_1(T_1^{(n)}, \dots, T_k^{(n)}), f_2(T_1^{(n)}, \dots, T_k^{(n)}) \big) \ge 0. 
\end{equation}
To see \eqref{e:pos.cov}, let $\Omega$ denote the space of finite subsets of $\T_n$, and define a partial ordering $\om \le \om'$ for $\om, \om' \in \Omega$ if and only if $\om \subset \om'$. Adding points to $\T_n$ naturally increases the chance of forming (not necessarily induced) subtrees. Thus, the Harris-FKG inequality \cite[Theorem 17.4]{poisBook} shows the required
$$
\E \big[ f_1(T_1^{(n)}, \dots, T_k^{(n)}) f_2(T_1^{(n)}, \dots, T_k^{(n)}) \big] \geq \E \big[ f_1(T_1^{(n)}, \dots, T_k^{(n)}) \big] \E \big[ f_2(T_1^{(n)}, \dots, T_k^{(n)}) \big].
$$
Note that the centered versions $\bar T_i^{(n)}:= T_i^{(n)}-\E \big[ T_i^{(n)} \big]$, $i=1,\dots, n$  are also  positively associated.

Next, we define the so-called \textit{Cox-Grimmett coefficient} (see \cite{cox:grimmett:1984}): for each $1\le k \le n$, 
\begin{equation} \label{e:CG.coeff}
u_n(k) := \max_{1\le p \le n} \sum_{j=1, \, |j-p|\ge k}^n \ms{Cov} (T_p^{(n)}, \, T_j^{(n)}). 
\end{equation}
Due to the cyclic structure of  torus, \eqref{e:CG.coeff} simplifies as 
$
u_n(k)  = 2\sum_{j=k+1}^{\lceil n/2\rceil} \ms{Cov} (T_1^{(n)}, T_j^{(n)}). 
$
A key feature  for the proof of Theorem \ref{t:CLT.sub-tree}  is the uniform boundedness of $u_n(k)$. To this end, Lemma \ref{l:cov.var.bdd.tree} below  plays an important role. In fact, it follows from Lemma \ref{l:cov.var.bdd.tree} $(i)$, together with the assumption $0< \gamma < 1/(2\ell)$, that
\begin{equation}  \label{e:u.nk.bdd}
 \sup_{n} u_n(k) \le C  \sum_{j=k+1}^\infty j^{-(2-2\ell\gamma)} \le C  k^{-(1-2\ell\gamma)}. 
\end{equation}
\begin{lemma}  \label{l:cov.var.bdd.tree}
	Let $\ms T$ be a directed tree with $\ell \ge 1$ leaves. 
	Then,  it holds that $(i)$
	$\sup_{n \ge 1}\ms{Cov} (T_1^{(n)}, \, T_k^{(n)}) \in O( k^{-(2-2\ell\gamma)})$ and $(ii)$
	$\inf_{n \ge 1}{\Var}(\mc T_{n, T})/n>0$.
\end{lemma}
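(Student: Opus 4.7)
The plan is to address (ii) first, as it is the simpler of the two statements, and then to tackle (i) via an overlap decomposition of the second moment of $T_1^{(n)} T_k^{(n)}$.

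For (ii), I would invoke the Poisson Poincaré inequality
$$\Var(T_1^{(n)}) \ge \int_{\T_n} \bigl(\E[D_p T_1^{(n)}]\bigr)^2 dp,$$
a direct consequence of the Fock-space decomposition of square-integrable Poisson functionals. For $p = (x, u)$ with $x \in [0, 1)$, the add-one operator $D_p T_1^{(n)}$ dominates $\inDt(p; \mP_n \cup \{p\})$, so by torus translation invariance $\E[D_p T_1^{(n)}] \ge \E[\inDt(0, u)]$. Restricting the integral to $(x, u) \in [1/4, 3/4] \times [1/2, 3/4]$, the expected number of $\ms T$-copies rooted at such a $p$ is bounded below by a strictly positive constant uniform in $n \ge n_0$: in this mark range the reach of $p$ is of order one, so the required local configurations occur with strictly positive probability and, for $n$ sufficiently large, the torus boundary is immaterial. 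This yields $\Var(T_1^{(n)}) \ge c > 0$. Positive association of $(T_i^{(n)})_{i=1}^n$ (Harris--FKG) then implies all cross-covariances are non-negative, so
$$\Var(\mc T_{n, \ms T}) \ge \sum_{i=1}^n \Var(T_i^{(n)}) = n \Var(T_1^{(n)}) \ge c n.$$

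For (i), writing $T_i^{(n)}$ as a sum of indicators over ordered $m$-tuples of distinct points of $\mP_n$ forming a $\ms T$-copy whose root lies in $[i-1, i) \times [0, 1]$, I would expand $\E[T_1^{(n)} T_k^{(n)}]$ via the multivariate Mecke formula and decompose according to the overlap pattern $J$ between the two tuples. For $J = \emptyset$, the integrand factorises across disjoint vertex sets: conditional on $\mP_n$, the ADRCM edges are independent Bernoullis, so subtree events on disjoint vertex sets are independent, and the Poisson measure factorises on disjoint spatial regions; this contribution equals $\E[T_1^{(n)}] \E[T_k^{(n)}]$. Hence
$$\Cov(T_1^{(n)}, T_k^{(n)}) = \sum_{J \ne \emptyset} C_J,$$
a finite sum with $|J| \le m - 1$ (the two roots lie in disjoint spatial intervals, so cannot be shared). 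The key geometric input is that any shared vertex $(z, w)$ must simultaneously satisfy $|z - x_1|^d u_1^\gamma w^{1-\gamma} \le \beta$ and $|z - x_k|^d u_k^\gamma w^{1-\gamma} \le \beta$, where $(x_1, u_1)$ and $(x_k, u_k)$ are the two roots. Since the torus distance $|x_1 - x_k|$ is at least $k - 2$ for $k \le n/2$, the triangle inequality forces at least one root mark to be small, $\min(u_1, u_k) \lesssim k^{-1/\gamma} w^{-(1-\gamma)/\gamma}$, and this is the source of the $k$-decay.

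To estimate each $C_J$, I would integrate out the non-shared non-root vertices layer by layer along $\ms T$: each leaf contributes a factor of order $u^{-\gamma}$ after its position and mark are integrated (the mark integral converges under $\gamma\ell < 1/2$), and internal vertices contribute analogous factors once their subtree is integrated out. Accumulating these across both copies gives a product $\lesssim u_1^{-a_1} u_k^{-a_k}$ whose total exponent reflects the non-shared leaves of each copy. The final integration over $u_1, u_k, w$ subject to the small-mark constraint, split into cases depending on which root is forced to be small, then yields $C_J \in O(k^{-(2-2\ell\gamma)})$. Summing over the finitely many overlap patterns $J$, and handling the regime $k > n/2$ by replacing $k$ with the torus distance $k \wedge (n - k)$, proves (i). The main obstacle is the configurational bookkeeping for a general directed tree $\ms T$: the shared vertex can be any internal vertex, each choice of $J$ imposes different constraints on the admissible mark orderings along the edges of $\ms T$, and one must verify that the claimed decay rate $k^{-(2-2\ell\gamma)}$ holds uniformly over all such patterns.
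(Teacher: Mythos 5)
Your argument for part (ii) via the first-chaos lower bound $\Var(F)\ge\int(\E[D_pF])^2\,dp$ is a valid alternative to the paper's route, which lower-bounds $\Var(T_1^{(n)})$ by $\int_0^1\mu_{n,\ms T}(u)^2\,du$ via a Mecke expansion and Fatou's lemma. Both yield a uniform positive lower bound on $\Var(T_1^{(n)})$, after which positive association finishes the proof identically. (A minor terminological point: what you invoke is the first Wiener--It\^o chaos contribution from the Fock-space decomposition, not the Poincar\'e inequality, which gives the reverse bound.)

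For part (i), however, there is a genuine gap in the key geometric step. You assert that a shared vertex $(z,w)$ must simultaneously satisfy $|z-x_1|\,u_1^\gamma w^{1-\gamma}\le\beta$ and $|z-x_k|\,u_k^\gamma w^{1-\gamma}\le\beta$, hence $\min(u_1,u_k)\lesssim k^{-1/\gamma}w^{-(1-\gamma)/\gamma}$. This is the connectivity constraint for a \emph{direct} edge to the root, so it holds only when the shared vertex is adjacent to the root in both embedded copies of $\ms T$. In general, the shared vertex $(z_0,w_0)$ is linked to each root through a chain of intermediate vertices $(z_0,w_0)\to(z_1,w_1)\to\cdots\to(z_d,w_d)\to(x,u)$, and the total spatial distance from $(z_0,w_0)$ to $(x,u)$ is only controlled by the \emph{sum} of the per-edge bounds $\beta w_{i+1}^{-\gamma}w_i^{\gamma-1}$, each of which involves two intermediate marks rather than the root mark $u$ alone; no direct constraint of the form $u\lesssim k^{-1/\gamma}w_0^{-(1-\gamma)/\gamma}$ follows. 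The paper's proof instead observes that some single edge $(z_q,z_{q+1})$ in the combined chain must have length at least $k/(d+e+2)$, which pins the mark $w_q$ of that particular vertex down to $w_q\le s(u,k)$. Because the constrained vertex can sit anywhere along the chain depending on which edge is long, the subsequent integration requires the detailed case split (the paper's Cases (I)--(V)), with the $O(k^{-(2-2\ell\gamma)})$ decay verified separately for each position. Your layer-by-layer integration strategy and the shape of the final exponent are right, but without replacing the direct-edge claim by this per-edge analysis, the proposal only establishes the lemma in the special case where the shared vertex is adjacent to both roots.
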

The uniform boundedness in \eqref{e:u.nk.bdd}, along with the positive associativity of  $(T_i)_{i=1}^n$, enables us to complete the proof by following an approach similar  to that of Theorems 4.1 and~4.8 in \cite{oliveira:2012}.

 %
 %
 \section{Mean and covariance asymptotics for clique counts}
 \label{sec:mom}
In this section, we prove the covariance asymptotics from Proposition \ref{pr:lvarcliq}. Related covariance computations were carried out  in \cite{ht24}. However, we cannot directly cite those results, as the analysis in \cite{ht24} is conducted in $\R$ with infinite volume, whereas the present work considers a torus of finite volume for each fixed $n \ge 1$, and the contribution of the boundary is actually sensitive to our analyses. Owing to this difference,    it is necessary to carefully examine the rate of convergence of the covariance in   the torus setting toward its counterpart defined on $\R$. This analysis requires substantial efforts and  constitutes  the main focus  of this  section.

We recall from \cite[Proposition 4.1]{glm} a basic result on the size of the up- and down-neighborhoods of a point in the ADRCM on $\mP$. We write 
$N^{\ua}(p) := \{q \in \RR: q\to p\}$
and
$N^{\da}(p) := \{q \in \RR: p \to q\}$
for the up- and down-neighborhoods of $p \in \RR$. Write also $N^{\ua}(u) :=  N^{\ua} ((0,u))$ for $u\in (0,1)$, and define $N^{\da}(u)$ analogously. 

%
%
\bel[Size of up- and down-neighborhoods]
\label{lem:down}	
Let $u\in (0,1)$. Then, $\PP(N^{\ua}(u))$ and $\PP(N^{\da}(u))$  are independent Poisson variables with respective parameters $$\lambda_u^+=c_+ u^{-\gamma},\;\lambda_u^-=c_-$$ for some $c_-,c_+>0$ depending only on $\b$ and $\g$.

In particular, the distribution of the number of down-neighbors does not depend on $u$, and if $U$ is uniformly distributed in $(0,1),$ the number of neighbors (or equivalently,  up-neighbors) of $U$ has a finite moment of order $q\in \mathbb{N}$ iff $q\g<1.$

\enl

Before proceeding, we   define the clique  counts analogously to the paragraph preceding   Proposition \ref{pr:lvarcliq}. Specifically,     given $\G_{n,\g,\b}$ (i.e., the ADRCM defined on $\mP_n$), together with $k, \ell \ge 1$ and $p, q \in \widebar \bbT_n$, let $\CC_{n,k}(p)$ be    the number of $k$-cliques    centered at $p$. Note that $\CC_{n, k}$ is the sum of the $\CC_{n, k}(p)$ for $p \in \mP_n$.  Let $\CC_{n,k,\ell}(p,q)$ be  the number of point sets    that  generate both a $k$-clique centered at $p$ and an $\ell$-clique centered at $q$, such that the two cliques  share at least one common vertex.  
As before, we also  define $\CC_{n,k}(u) := \CC_{n,k}((0,u))$ and $\CC_{n,k,\ell}(u,q) := \CC_{n,k,\ell}((0,u),q)$ for $u \in (0,1)$ and $q \in \widebar \bbT_n$.

Now, we prove Proposition \ref{pr:lvarcliq} by expanding the covariance using the Mecke formula:  

\begin{align}   
\Cov(\CC_{n, k}, \CC_{n, \ell}) &= \int_{\T_n}\E\big[\CC_{n, k}(p) \CC_{n, \ell}(p)\big]  \d p
+ \iint_{\T_n^2}\E\big[\CC_{n, k, \ell}(p, q)\big] \d (p, q)  \label{eq:ccn} \\
&= n \int_0^1\Big( \E[\CC_{n, k}(u)]\E[\CC_{n, \ell}(u)] + \Cov\big(\CC_{n, k}(u), \CC_{n,\ell}(u)\big) +	 \int_{\T_{n}}\E\big[\CC_{n, k, \ell}(u, q)\big] \d q \Big)\d u, \notag 
\end{align}  
where the second equality follows from the homogeneity of torus. 
In the remainder of the proof, our goal is to establish  the rate of convergence for each of the individual terms in \eqref{eq:ccn}. For the precise statements, however, we need to  ``redefine" all clique counts in \eqref{eq:ccn}, along with  their counterparts induced by  $\mP$,  on a common probability space. For this purpose,   we use a coupling of $\PP_{n}$ and $\PP$ by reformulating them as $\mP_n = \mP \cap \bbT_n$, where $\bbT_n = [-n/2, n/2]$. In the preceding   section, $\bbT_n$ denoted the torus; however, in this section, we define it as the interval above, with some abuse of notation. In this new formulation, we continue to define $\widebar \bbT_n := \bbT_n \times [0,1]$ as before.

%
%
\bel[Auxiliary expectation computations]
\label{lem:exp} 
Let $k\ge1$. Then, $(i)$  $\sup_{n \ge 1}\E[\CC_{n, k}(u)] \in O( u^{-\g})$ and $(ii)$ $\int_0^1 u^{-\g}\E\big[|\CC_k(u) - \CC_{n, k}(u)|\big] \d u \in O\big(n^{-\zeta(\g)}\big)$.
\enl

Next, we deal with the single-integral covariance from Lemma \ref{lem:cov1} below. The key idea is that by expanding the covariance, we need to count   pairs of suitably selected $k$-cliques   and $\ell$-cliques,  both centered at $(0, u)$, such that the two cliques share  at least one common vertex. 

%
%
\bel[Auxiliary covariance computations I]
\label{lem:cov1} 
Let $k, \ell \ge 1$. Then, 
$$		\int_0^1\big|\Cov\big(\CC_k(u) - \CC_{n, k}(u), \CC_{n, \ell}(u)\big)\big|\d u \in O(n^{-\zeta(\g)}).$$
\enl

Finally, we deal with the most complicated case, namely the double-integral contributions.

\bel[Auxiliary covariance computations II]
\label{lem:cov2} 
Let $k, \ell \ge 1$. Then,    
\been
\im[$(i)$]  $		\int_0^1\int_{\RR\sm \T_n}\E\big[\CC_{k, \ell}(u, q)\big] \d q\d u \in O(n^{-(1 - \g)})$, and  
\im[$(ii)$]	$\int_0^1\int_{\T_{n}}\E\big[\big|\CC_{k, \ell}(u, q) - \CC_{n, k, \ell}(u, q)\big|\big]  \d q \d u\in O( n^{-(1-\g)})$.
\enen
\enl

These three auxiliary results imply Proposition \ref{pr:lvarcliq} as follows. 

%
%
\bep[Proof of Proposition \ref{pr:lvarcliq}]
For the convergence of the covariance, we consider the elements in the decomposition \eqref{eq:ccn} separately. First, for the product of expected values, we apply Lemma \ref{lem:exp}. Second, for the convergence of the covariance, we apply Lemma \ref{lem:cov1}. Finally, for the convergence of the double integral, we apply Lemma \ref{lem:cov2}, thereby concluding the proof.
\enp

Now, it  remains to prove Lemmas \ref{lem:exp}, \ref{lem:cov1}, and \ref{lem:cov2}. We prove Lemmas \ref{lem:exp} and \ref{lem:cov1} here, while the proof of Lemma~\ref{lem:cov2}, more technical but based on similar ideas, is deferred to the Appendix \ref{app:A}.
The first step is to provide a usable interpretation of the difference  $\CC_k(u) - \CC_{n, k}(u)$. 
More precisely, this difference can be decomposed into \emph{positive} and \emph{negative} contributions. 
The positive contribution arises  from  $k$-cliques {of $ \PP$} centered at $(0, u)$, having  at least one point lying  outside of $\T_{n}$. The negative contribution comes from $k$-cliques of $\mP_n$, all of whose vertices lie in $\widebar \bbT_n$, but for which there exists a pair of vertices $P_i=(X_i,U_i) \in \widebar \bbT_n$, $i=1,2$, such that $\{X_1, X_2\}$ forms an edge under the toroidal metric but not under the Euclidean metric.
We denote by $\C_{n,k}(u)$ the family of such $k$-cliques contributing either positively or negatively. Note that each $k$-clique in $\C_{n,k}(u)$ contains at least one point $P = (X, U)$ with $|X| \ge n/2$.

Henceforth, to simplify  notation, we set $n_1 := n/8$.
Moreover, when convenient, we will see $ \PP$ and $ \PP_{ n}$ both   as sets and random measures, allowing us to    write $ \PP(A)$ or $  | \PP\cap A | $ for $ A\subset   \overline{   \mathbb{R}}.$ Denote also by $ \ms{Poi}(c)$, a Poisson random  variable with parameter $ c$, or just its law in an abuse of notation. Finally, we remark that $ \PP(A)\sim \ms{Poi}( | A | )$.

%
%
\bep[Proof of Lemma \ref{lem:exp}]
To prove part $(i)$, we consider the highest marked point $Q=(Z, W)$ of the $k$-clique. Apart from a point $ (0,u)$, the remaining $k - 2$ points  of the $k$-clique  lie in  $N^{\da}(Q)$. We recall from Lemma \ref{lem:down}	that the number of down-neighbours of $ Q$ is Poisson distributed with a parameter bounded above by a value independent of $W$ and $n$. Therefore, the number of $ k$-cliques having $(0,u)$ as its lowest marked vertex, is bounded by the number of $ (k-2)$-tuples of points in the down-neighborhood of $Q$. Now, it follows from Lemma \ref{lem:down} that the desired upper bound is given by   
\begin{align}
 \label{eq:neighb-technique}
	{\E \big[\hspace{-.3cm}\sum_{Q\in N^{\ua}(u)\cap \PP}\hspace{-.3cm}\PP(N_{}^{\da}(Q)^{k-2})\big]=\int_{N^\ua(u)}\hspace{-.3cm}\mathbb{E}[\PP(N_{}^{\da}(q))^{k-2}]\d q
	\le\int_{N^{\ua}(u)}\hspace{-.4cm}\E\big[\ms{Poi}(c_-)^{k-2}\big]\d q \in O(u^{-\g})}. 
\end{align}
Next, we continue with part $(ii)$. As stated in the paragraph preceding the proof of Lemma \ref{lem:exp}, it suffices to give an upper bound on the expected number of cliques in $\C_{n,k}(u)$. Then, we distinguish two (non-exclusive) cases. In the first case, the highest marked point $(Z,W)$ is outside of $[-n_1, n_1]\times [0,1]$. In the other case, at least one of the down-neighbors of $(Z, W)$ has at least distance $n_1$ from it.

To ease computations, we assume, without loss of generality, that $\beta = 1$. Note that if $q = (z, w)\in \T_n$ is such that $q \to (0, u)$, with $|z| > n_1$, then  {$ | z |  u^{ \gamma }w^{ 1-\gamma }<1$, and so, $ u = u^{ \gamma  + 1-\gamma }\le u^{ \gamma }w^{ 1-\gamma }<n_{ 1}^{ -1}$}. Hence, in the first case, using the same technique as in  \eqref{eq:neighb-technique} and writing $B(0,n_1)= \{ z \co |z|\le n_1 \}$,  
\begin{align}
	\label{eq:exp1}
	&	\int_0^{n_1^{-1}} u^{-\g} \E\big[\PP(N^\ua(u)\sm B(0, n_1))\big] \d u \le\int_{0}^{n_1^{-1}}u^{-\gamma }\int_{B(0,n_1)^c\times (u,1)}\one\big\{q\to (0,u)\big\}\d q\\
	\notag\le &\int_{0}^{n_1^{-1}}u^{-\gamma }\int_{B(0,n_1)^c\times (u,1)}\one\{ | z | <u^{-\gamma }w^{\gamma -1}\}\d z \d w 
	\le c\int_0^{n_1^{-1}}\int_u^1 u^{-\g}  (u^{-\g}w^{\g - 1} - n_1)_+ \d w \d u\\
	\notag &\le  C\int_0^{n_1^{-1}} u^{-2\g}(1\wedge (u^\g n_1)^{-1/({1 - \g})})^\g \d u.
\end{align}
Hence, there are two cases, namely $u \le n_1^{-1/\g}$ and $u > n_1^{-1/\g}$. In the first case, we obtain an upper bound of  order $cu^{-2\g}\one\{u \le n_1^{-1/\g}\}$. {In this case, integrating over $u$, we obtain an upper bound of order $O(n^{-(1-2\g)/\g})$.} In the second case, {we also deduce from $u\le w$ that $u\le n_1^{-1}$}, and hence, we obtain the upper bound 
$c\one\{ n_1^{-1/\g} \le u \le n_1^{-1}\}n^{-\g/(1-\g)}u^{-2\g - \g^2/(1 - \g)}.$ 
By integrating over $u$, we obtain an upper bound of order $O(n^{-\zeta(\g)}  )$. 

Finally, consider the case where at least one down-neighbor of $(Z, W)$ has distance at least $n_1$. The computation is similar, but we still  need to bound   the integral of  the relevant expression
$u^{-2\g}w^{\g -1}\E\big[\PP(N^\da((z,w))\sm B(z,n_1))\big]$
 over $u \le w \in [0,1]$. 
Note that 
$ \E\big[\PP\big(N^\da((z,w))\sm B(z,n_1)\big)\big] \le w^{\g -1}(w \wedge (w^{ 1- \g}n_1)^{-1/\g})^{1 - \g}.$
If  $w \le n_1^{-1}$, then integrating over $u\le w \le n_1^{-1}$ gives an expression of order $O(n^{-(1 - \g)})$. Second, in the case where $w \ge u \ge n_1^{-1}$,  we obtain an expression of order 
$\int_{n_1^{-1}}^1u^{-1}u^{-(1 - \g)^2/\g} n_1^{-(1 - \g)/\g} \d u \in O(n^{-(1-\g)}).$
Finally, consider the case where $w \ge n_1^{-1} \ge u$. Then, we obtain 
$n_1^{-\g}\int_0^{n_1^{-1}} u^{-2\g} \d u$,  which is again of order  $O(n^{-(1- \g)})$, thereby concluding the proof.
\enp

%
%
%
%

%
%
\bep[Proof of Lemma \ref{lem:cov1}]
We reduce the argument  to finding the desired bounds as  in the proof of Lemma \ref{lem:exp}. 
Specifically, we again  decompose $\CC_k(u) - \CC_{n,k}(u)$ into its positive and negative contributions. In addition, it is    necessary to count the number of pairs consisting of a $k$-clique in $\C_{n,k}(u)$ and     an additional $\ell$-clique centered at $(0,u)$,   such that the two cliques share at least one common vertex.  
If the two cliques in a pair   share the same highest marked vertex  $(Z,W)$, then all other nodes must lie   in the down-neighborhood of $(Z,W)$,   and the assertion follows as in the proof of Lemma~\ref{lem:exp}. 
Otherwise, the two cliques     have distinct highest marked vertices, denoted respectively as $(Z,W)$ and $(Z',W')$. In this case, the down-neighbors of $(Z,W)$ and $(Z',W')$ are both Poisson distributed with constant parameters. The contribution from the additional $\ell$-clique can still  be bounded above by $Cu^{-\g}$, and the rest of the argument is the same as the proof of Lemma \ref{lem:exp}. 
\enp

\section{Proof of Proposition \ref{pr:gisu} -- CLT for clique counts}
\label{sec:cliq}

In this section, we prove Proposition \ref{pr:gisu}. We begin with the analysis of the $\Ga_3'$-term. This term needs a simpler analysis than $\Ga_1'$ and $\Ga_2'$, as it involves only the moments of first-order differences. We then proceed to handle $\Ga_1'$ and $\Ga_2'$, which are substantially more delicate,  due to the need to control moments of second-order differences. To ease notation, we henceforth assume $\beta = 1$.
%
%
\bepr[Moment bound for $D_u\CC_{n, k}$]
\label{pr:dpc}
Let $k_0 \ge 1$ and  $a > 1$ be such that $a\g < 1$. Then, 
 $$\limsup_{u \to 0} \sup_{n \ge 1}\max_{k \le k_0}\f{\log\E[(D_u\CC_{n, k})^a]}{\log(1/u)} \le a\g.$$
\enpr

Before proving Proposition \ref{pr:dpc}, we explain how to deduce the rate of  convergence of  $\Ga_3'$.

%
%
\bep[Proof of Propositions \ref{pr:gisu}; $\Ga_3'$-term at \eqref{Ga3}.]  
Choose $\eta \in (1,2)$ with $(\eta + 1)\g < 1$. 
By Proposition \ref{pr:dpc}, we see that $\sup_{n \ge 1}\E[(D_u\CC_{n, k})^{\eta+ 1}] \le c u^{-(\eta + 1)\g + o(1)}$. Hence, we have, as asserted, that $ \sup_{n \ge 1}\int_0^1\E[(D_{u}\CC_{n, k})^{\eta + 1}]\d u < \ff. 
$

\enp

To derive the   rates of convergence for  $\Ga_1'$ and $\Ga_2'$, we need bounds for the moments of the second-order differences. To state them precisely, we introduce the function 
\begin{align}
	\label{eq:sur}
	s_\wedge(u, r) := 1\wedge (u^\g r)^{-1/(1 - \g)} \one\{r \le 2u^{-1}\}, \ \ \ u\in (0,1), \ \ r >0. 
\end{align}

%
%
\bepr[Moment bound for $D_{u, q}\CC_{n, k}$]
\label{pr:dpqc}
Let $a > 1$ be such that $a\g < 1$ and $k_0 \ge 1$. Then, there exists a constant $C = C(a, \g) >0$ such that for $0 \le u \le v \le1$ and $q=(y,v)\in \widebar \bbT_n$, 
$$
\max_{k\le k_0}\E[(D_{u,q}\CC_{n, k})^a] \le C\big(\one \{q \in N^{\ua}(u)\} + v^{-a\g} s_\wedge(u, | y|)^{a \g}\big). 
$$
\enpr

We also need a specific integral computation as follows. 

%
%
\bel
\label{lem:intcomp}
Let $ a\in \{1,2\}$. Then,
$\int_{\R}s_\wedge(u, |y|)^{a\g} \d y \in O(u^{-(\g \vee (1 - a\g))}\log(1/u))$.
\enl
\bep
Since $|[x- u^{-\g}, x+ u^{-\g}]| =2u^{-\g}$, we may assume that $| y| \ge u^{-\g}$.  Also, $s_\wedge(u, |y|)^{a\g} \le  u^{-a\g^2/(1 - \g)} |y|^{-a\g/(1 -\g)} $.
We first assume $a\g/(1 - \g) > 1$ so that 
$\int_1^{\ff} r^{-a\g/(1- \g)} \d r < \ff.$
Hence, using the change of variables $r:= u^\g | y| $, we have that
$\int_{\R}s_\wedge(u,|y|)^{a\g}\d y\in O(u^{-\g}).$
Second, we consider the case $a\g/(1 - \g) < 1$. Then, by \eqref{eq:sur}, we  have $| y| \le2 u^{-1}$. Moreover,
$u^{-a\g^2/(1-\g)}\int_0^{2u^{- 1}} r^{-a\g/(1- \g)} \d r  \in O(u^{-(1 - a\g)}),$
as asserted. The case $a\g=(1 - \g)$ is similar except for the appearance of a new $\log (1/u)$ term. 
\enp

By Propositions \ref{pr:dpc} and \ref{pr:dpqc}, we can obtain   the asymptotic bounds for  $\Ga_1'$ and $\Ga_2'$.  For the rest of this section, we choose $\eta \in (1,2)$ such that $\eta (2\g \vee (1 - \g)) < 1$. We henceforth put $\tau(\g) = \g \vee ( 1- 2\g)$.

%
%
\bep[Proof of Proposition \ref{pr:gisu}; Term at \eqref{Ga2}]
We need to control the expression  
\begin{align}
	\label{eq:gisui}
	\int_{0}^1 \left(\int_{\T_n} \max_{k \le k_0} \E\left[(\MalD_{u,q} \CC_{n, k})^{2\eta} \right]^{1/\eta} \d q \right)^\eta \d u, 
\end{align}   
where $q=(y,v)$. We first assume that $u \le v$.
Then, by Proposition \ref{pr:dpqc} with $a=2\eta$, 
\begin{align}
	\label{eq:p12}
	\max_{k\le k_0}\E\left[( \DD_{ u, q}\CC_{n, k})^{2\eta}\right]^{1/\eta} \le C\big(\one \{q \in N^{\ua}(u)\} + v^{-2\g} s_\wedge(u, |y|)^{2\g}\big).
\end{align}
For the inner integral, note that $|N^{\ua}(u)|\in O( u^{-\g})$. Also, we can apply Lemma \ref{lem:intcomp}, which gives that 
$\int_{\R} s_\wedge(u,|y|)^{2\gamma} \d y \in O\big(u^{-\tau(\gamma)}\log(1/u)\big)$. Hence, we bound \eqref{eq:gisui} as 
$\int_0^1u^{-\tau(\gamma)\eta}(\log u^{-1})^\eta \d u <\ff$. 

Next, we assume $v < u$. Then, \eqref{eq:p12} holds with the roles of $u$ and $v$ reversed. Note that $|N^{\da}(u)|\le C$. Moreover,
 we apply Lemma \ref{lem:intcomp} to compute the integral over $y$, resulting in 
$$\int_0^1\int_{\R}s_\wedge(v, |y|)^{2\g} \d y\d v \le c\int_0^1 v^{-\t(\g)}\log(1/v) \d v< \ff.$$
Since 
$\int_0^1u^{-2\g\eta} \d u<\ff$, we can conclude  the proof.
\enp

The proof of the $\Ga'_1$-term is similar, but for convenience of the reader, we still provide some details.
%
%
\bep[Proof of Proposition  \ref{pr:gisu}; Term at \eqref{Ga1}]
First, note that $\E\left[(D_q\CC_{n, k})^{2\eta}\right]^{1/(2\eta)} \in O(v^{-\g})$  by Propositions \ref{pr:dpc}. Now, we first consider the case where $u \le v$. We note that integrating $u^{-\g}v^{-1}$ for $v \ge u$ gives an expression of order $O\big(u^{-\g}\log(1/u)\big)$. This quantity is integrable, even when raised to the power $\eta$.
Hence, by Proposition \ref{pr:dpqc} with $a=2\eta$ it remains to deal with the quantity
\begin{align}
	\label{eq:g1}
	 Cv^{-2\g} s_\wedge(u, |y|)^\g.
\end{align}
Now, we integrate over $y$, applying  Lemma \ref{lem:intcomp}. This yields a term of order $O\big(u^{-(1-\g)}\log(1/u)v^{-2\g}\big)$. Hence, integrating over $u$ and $v$ shows the asserted finiteness, even after raising to the power $\eta$.

Next, we deal with the case $v<u$. Here, we need to first consider the integral of $v^{-2\g}u^{\g - 1}$ over $v \le u$, which is of order $O(u^{-\g})$. Hence, this quantity is integrable even after raising to the power $\eta$. Now, analogous to \eqref{eq:g1}, we need to deal with  $Cu^{-\g}v^{-\g} s_\wedge(v, |y|)^\g$. If $|y| \le v^{-\g}$, then we obtain an expression of order $Cu^{-\g}v^{-2\g}$. Hence, we can from now on assume that $|y|>v^{-\g}$. 
Again, we can integrate out $y$ using Lemma \ref{lem:intcomp} and are reduced to the quantity $Cu^{-\g}v^{-1}$. Now, integrating $v$ in the domain $n^{-1/\g} \le v \le u$ gives an expression of order $C u^{-\g} \log(n^{1/\g}u)$. Raising to the power $\eta$ and integrating, we obtain a quantity of order $(\log n)^\eta$.
\enp

%
%
\subsection{Proof of Propositions \ref{pr:dpc} and \ref{pr:dpqc}}
\label{ss:dpqc}

It remains to prove Propositions \ref{pr:dpc} and \ref{pr:dpqc}. For both cases, a key ingredient is the following moment bound on the maximal degree, defined by 
$$D_{\max}(u) := \max_{P \in \mP\cap N^\ua(u)}\PP(N^\da(P)), \ \ \ u \in (0,1).$$

%
%
\bel[Moment bound for $D_{\max}(u)$]
\label{lem:dm}
Let $a \ge 1$. Then,
$ {\log\E[D_{\max}(u)^a]}\in o({\log(1/u)}).$
\enl
\bep
By the Cauchy-Schwarz inequality, for every $ \varepsilon >0,$
\begin{align*}
	\E[D_{\max}(u)^a] &\le u^{-\e} + \E\big[D_{\max}(u)^a \one\{D_{\max}(u) > u^{-\e/a}\}\big].
\end{align*}
By bounding the max by the sum, together with the Mecke formula, the expectation on the right-hand side is bounded above, up to a multiplicative constant, by  
\begin{align*}
&\E\Big[\sum_{Q\in \PP}\one{\{Q \to (0,u)\}} \PP( N^{\da}(Q) )^a \one{\{ \PP( N^{\da}(Q) ) >u^{-\varepsilon /a}\}}\Big]
	\\
	 &= \int_{  \RR}\one \{q\to (0,u)\} \E\big[ \PP( N^\da(q) ) ^a\one \{ \PP( N^\da(q) )  > u^{-\e/a}\big\}\big] \d q\\
	 &\le \E \big[N^a\one{\{N>u^{-\varepsilon /a}\}}\big]\E \big[\PP(N^{\ua}(u))\big],
\end{align*}
where $ N\sim {\ms{ Poi}}(c_-)$ (see Lemma \ref{lem:down}). Here, the first expectation of the rightmost term is bounded by $c_1\exp(-c_2u^{-\e/a})$ for some $c_1, c_2> 0$. 
By  Lemma \ref{lem:down}, we also have  $\E[\PP(N^{\ua}(u))] \in O(u^{-\g})$, which concludes the proof.
\enp

It now remains to complete the proofs of  Propositions \ref{pr:dpc} and \ref{pr:dpqc}.

%
%
\bep[Proof of Proposition \ref{pr:dpc}] 
First, for each $1\le \ell \le k$, we write $\CC_{n, k}^{(\ell)}(u)$ for the number of $k$-cliques in which  $ (0,u)\in \widebar \bbT_n$ is  its $\ell$th lowest marked vertex.  
Then, we claim that for every $1\le \ell \le k \le k_0$, 
\begin{align}
	\label{eq:cnmk}
		\limsup_{u \to 0}\sup_{n \ge 1}\f{\log\E[\CC_{n, k}^{(\ell)}(u)^a]}{\log (1/u)} \le a\g.
\end{align}
		In particular, \eqref{eq:cnmk}  immediately implies  Proposition \ref{pr:dpc}. 

We focus here on the   case $1 \le \ell \le k - 1$. For any $k$-clique contributing to $\CC_{n, k}^{(\ell)}(u)$, there exists   a point $P \in \PP_n$ with the highest mark within this $k$-clique,  such that   $P \to (0, u)$ and the remaining $k - 2$ points of that clique lie in $N^\da(P)$.   Thus, it follows that 
\begin{align}
	\label{eq:cnmka}
	\CC_{n, k}^{(\ell)}(u) \le 
	\PP(N^\ua(u))\max_{P\in \PP \cap N^{\ua}(u) }\PP(N^\da(P))^{k-2}.
\end{align}
Hence, for any $b_1, b_2 >0$ with $1/b_1 + 1/b_2 = 1$,  the H\"older inequality gives that 
$$ 
\E[\CC_{n, k}^{(\ell)}(u)^a] \le \E[\PP(N^\ua(u))^{ab_1}]^{1/b_1}\E[\max_{P\in \PP \cap N^{\ua}(u)}\PP(N^\da(P))^{ab_2(k-2)}]^{1/b_2}.
$$
By Lemma \ref{lem:down}, the first expectation is of order $O(u^{-a\g})$, and by Lemma \ref{lem:dm}, the second term is of logarithmic order, thereby concluding the proof of \eqref{eq:cnmk}.  
\enp
\smallskip

%
%
\bep[Proof of Proposition \ref{pr:dpqc}] Throughout the proof, we write $q=(y,v)$. 
Our envisioned bound will be derived from 
$(D_{u, q}\CC_{n, k})^a \le G_1 + G_2,$
where the split into two terms depends on whether the highest marked point $(Z, W)$ of the considered $k$-clique equals $q$.  We need to discuss the following  two cases: 
\been
\im $(Z, W) = q$.  In this case, it follows that 
$G_1 \le \one \{q \in N^{\ua}(u)\}\PP(N^\da(q))^{a(k-2)}$, 
so that 
$$\E[G_1] \le \one \{q \in N^{\ua}(u)\} \E[\PP(N^\da(q))^{a(k-2)}]. $$
By Lemma \ref{lem:down}, the expectation on the right-hand side is of finite order. 
\im $(Z, W) \ne q$.  We let 
$N^\ua(u, q) := \{r\in \widebar \bbT_n \co r \to (0, u), \, r \to q\}$
denote the collection  of points connecting toward two nodes $(0, u)$,  $q\in\widebar  \bbT_n$. Then, as in \eqref{eq:cnmka}, the number of $k$-cliques containing $(0, u)$ and $q$, such that the  highest marked point lies above $(0,u)$ and $q$, is at most 
$\PP\big(N^\ua(u, q)\big)^aD_{\max}(q)^{a(k-3)}.$
Now, by applying the H\"older inequality for $b_1, b_2 > 0$ with $1/b_1 + 1/b_2 = 1$, 
\begin{equation}  \label{e:EG_2}
\E[G_2] \le \E[\PP(N^\ua(u, q))^{ab_1}]^{1/b_1} \E[D_{\max}(q)^{ab_2(k-3)}]^{1/b_2} 
\end{equation}
Observe that $\PP(N^\ua(u, q))$ is a Poisson random variable  with parameter bounded by $Cv^{-\g}s_\wedge(u, | y|)^\g$. Hence, 
$\E[\PP(N^\ua(u, q))^{ab_1}]^{1/b_1} \le Cv^{-a\g}s_\wedge(u, | y|)^{a\g}.$
Furthermore, by Lemma \ref{lem:dm}, the second expectation in \eqref{e:EG_2} is of order $o(\log(1/v))$.  Combining these two observations gives the asserted bound.
\enen
\enp

\section{Proof of Theorem \ref{t:CLT.sub-tree} -- CLT for subtree counts}
\label{sec:tree}
We begin with the asymptotic bounds for the moment of $\inDt(x,u)$. 
The proof of the following proposition  is deferred to the Appendix \ref{app:B}. 
First, we define the expectation of $\inDt(x,u)$  by 
$$
\mu_{n, \ms T}(u) := \E \big[ \inDt(x,u) \big], \ \ \ (x,u) \in \widebar \bbT_n. 
$$
Due to the homogeneity of $\mP_n$, we have dropped ``$x$" from the left-hand side.

\begin{proposition}  \label{p:exp.var.sub-tree}
Let $0< \gamma < 1/(2\ell)$, $\eta >0$ and $\ms{T}$ be a directed tree. Then, there exist $N\in \N$ and $C\in (0,\infty)$, such that  for all $n\ge N$ and $u\in (0,1)$, the following properties hold:
$(i)$
$
\mu_{n,\ms{T}}(u)\le  C u^{-\ell \gamma-\eta},
$
$(ii)$
$\ms{Var} \big( \inDt(u) \big) \le C u^{-2\ell\gamma-\eta}, 
$
and 
$(iii)$ 
$
\E \big[ \inDt(u)^3 \big] \le C u^{-3\ell\gamma -\eta}.
$
Furthermore, define 
$
\inD(u) := \lim_{n\to\infty} \inDt(u), \ \ \text{ and } \ \ \mu_{\ms{T}}(u):=\lim_{n\to\infty} \mu_{n,\ms{T}}(u); 
$
then, $\mu_{\ms{T}}(u)$, $\ms{Var} \big( \inD(u) \big)$, and $\E \big[ \inD(u)^3 \big]$ satisfy the same  bounds as in $(i)$ -- $(iii)$ above. 
\end{proposition}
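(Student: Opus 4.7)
The plan is to apply the multivariate Mecke formula to each moment order $p\in\{1,2,3\}$, expressing $\E[\inDt(u)^p]$ as a finite sum of integrals over the marks and positions of the non-root vertices of glued configurations of $p$ copies of $\ms T$, and then to control each summand by integrating out spatial coordinates edge by edge and bounding the resulting iterated mark integral via a recursive ``leaf-peeling'' argument.

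For part $(i)$, the first-order Mecke formula together with the hard profile yields
\begin{align*}
\mu_{n,\ms T}(u) \le C_{\ms T}\int_{(\widebar\bbT_n)^{m-1}} \prod_{\{i,j\}\in E(\ms T)} \one\{|X_i - X_j|\,U_i^\gamma U_j^{1-\gamma}\le 1\}\, dP_1\cdots dP_{m-1},
\end{align*}
where $m = |V(\ms T)|$ and $C_{\ms T}$ is an automorphism constant. Integrating out spatial coordinates one vertex at a time turns the integrand into a product of factors $\min(u_v,u_w)^{-\gamma}\max(u_v,u_w)^{-(1-\gamma)}$, one per edge $\{v,w\}$ of $\ms T$. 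I then peel off leaves of $\ms T$ recursively: integrating out a leaf $L$ with parent mark $u_p$ yields $\int_{u_p}^1 u_p^{-\gamma}u_L^{-(1-\gamma)}\,du_L\sim u_p^{-\gamma}$ when $L$ lies above its parent and $\int_0^{u_p} u_L^{-\gamma}u_p^{-(1-\gamma)}\,du_L = O(1)$ when $L$ lies below, so each internal vertex $v$ inherits a factor $u_v^{-k_v\gamma}$ from its upward subtree, where $k_v$ counts the number of leaves above $v$. The hypothesis $\gamma < 1/(2\ell)$ keeps every intermediate mark integral convergent, and the final accumulation at the root does not exceed $u^{-\ell\gamma}$ modulo polylogarithmic factors that the $u^{-\eta}$ slack absorbs.

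For parts $(ii)$ and $(iii)$, the same machinery applies to each ``overlap pattern'' of $p$ copies of $\ms T$, specifying which non-root vertices coincide between copies. Although a glued configuration may contain cycles, bounding the redundant edge indicators by $1$ reduces the relevant count to a spanning tree with at most $p\ell$ leaves; the leaf-peeling argument then gives a per-pattern bound of order $u^{-p\ell\gamma-\eta}$, and summing over the finite collection of patterns yields the desired moment bounds, with $(ii)$ immediate from $\ms{Var}\le \E[\inDt^2]$. For the infinite-volume assertions, I couple $\mP_n = \mP\cap(\bbT_n\times[0,1])$ with an underlying process $\mP$ on $\R\times[0,1]$; compact support of the hard profile ensures that every copy of $\ms T$ rooted at $(0,u)$ in the infinite graph has an almost surely bounded vertex set, so for $n$ large the torus identification introduces no extra copies and $\inDt(u)\to \inD(u)$ almost surely. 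Uniform integrability coming from the moment bounds just derived then transfers the same estimates to $\mu_{\ms T}$, $\ms{Var}(\inD(u))$, and $\E[\inD(u)^3]$. The principal obstacle is making the leaf-peeling argument rigorous for arbitrary directed trees with intermixed up- and down-edges: the delicate case is a ``down'' vertex $v$ sitting below its parent $p$ whose own children include several up-subtrees, which after partial peeling produces a mark integrand of order $u_v^{-(k+1)\gamma}$ on $[0,u_p]$, so integrability demands $(k+1)\gamma<1$, and this is precisely what $\gamma<1/(2\ell)$ enforces uniformly across all tree topologies.
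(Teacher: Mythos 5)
Your framework (Mecke expansion, spatial integration edge by edge, recursive mark integration) is the right one, and your part $(i)$ matches what the paper simply cites from \cite{ht24}. The gap lies in parts $(ii)$ and $(iii)$, at the step where you assert that bounding the redundant edge indicators by $1$ reduces the count to ``a spanning tree with at most $p\ell$ leaves'', and that leaf-peeling on that tree then yields $u^{-p\ell\gamma-\eta}$. Both halves of this assertion can fail.

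First, the spanning tree can have \emph{more} than $p\ell$ leaves, because deleting a cycle edge may turn one of its endpoints into a new degree-one vertex. Take $\ms{T}$ to be the directed path $c \to b \to a \to \ms{r}$, so $\ell=1$, and consider the $p=2$ overlap pattern in which the two copies share the middle vertex $b$. The union contains the cycle $\ms{r}-a_1-b-a_2-\ms{r}$; removing the edge $b \to a_2$ leaves a spanning tree whose leaves are $a_2$, $c_1$, $c_2$, namely three of them, exceeding $p\ell=2$. Second, and this is the substantive point: if one discards the \emph{entire} indicator of the removed edge (spatial \emph{and} mark-ordering part), naive leaf-peeling on this three-leaf tree gives exponent $3\gamma$, not $2\gamma$. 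Concretely, the remaining mark integral is bounded \emph{below} by a constant times
\begin{equation*}
u^{-2\gamma}\int_u^1 a_1^{-1}\biggl(\int_u^1 a_2^{\gamma-1}\,\d a_2\biggr)\biggl(\int_{a_1}^1 b^{-\gamma-1}\,\d b\biggr)\d a_1\;\ge\;c\,u^{-3\gamma},
\end{equation*}
which exceeds the target $u^{-2\ell\gamma-\eta}=u^{-2\gamma-\eta}$. What rescues the argument, and what the paper's proof in Appendix~\ref{app:B} actually exploits, is that when the spatial constraint of an edge is dropped, the mark ordering it encodes must be \emph{retained} in the integration domain. Retaining $a_2<b$ here replaces $\int_u^1 a_2^{\gamma-1}\,\d a_2 = O(1)$ by $\int_u^b a_2^{\gamma-1}\,\d a_2 \le Cb^\gamma$, and the extra factor $b^\gamma$ knocks the $b$-integral down to logarithmic order, recovering $u^{-2\gamma}(\log(1/u))^2 \le Cu^{-2\ell\gamma-\eta}$. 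The paper formalizes this via the leaf-count identities \eqref{e:ell.others1}--\eqref{e:ell.others3} together with the careful choice of which cycle edge to delete (the ``youngest pair'' $(z_0,w_0)\to(z_1',w_1')$), and it explicitly notes in the discussion following \eqref{e:Mecke.3rd.moment} that the new leaf has its mark bounded above by the common node's mark $w_0$. Without retaining these mark constraints, or without a case-by-case judicious choice of which cycle edge to drop (for instance, removing the cycle edge incident to $\ms{r}$ also works, because the newly created leaf is then a down-leaf contributing only $O(1)$), the bound $u^{-p\ell\gamma-\eta}$ does not follow from the spanning tree alone, and you will need to add this bookkeeping to close the argument.
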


\begin{proof}[Proof of Lemma \ref{l:cov.var.bdd.tree}]
\underline{\textit{Proof of $(i)$}}: For ease of description, we will establish an upper bound for $\ms{Cov}(T_1^{(n)}, \, T_{k+2}^{(n)})$. 
First, the Mecke formula yields that 
\begin{align}
\ms{Cov}&(T_1^{(n)},T_{k+2}^{(n)}) = \int_{[0,1]^2} \int_{[k+1,k+2] \times [0,1]}\ms{Cov} \big( \inDt(x,u), \inDt(y,v) \big)\dif y \dif v \dif x \dif u + R_k^{(n)} \label{e:cov.T1.Tk} \\
&= \int_{[0,1]^2} \int_{[k+1,k+2] \times [0,1]} \E \bigg[ \sum_{(P_1, \dots, P_{m-1})\in (\Pn)_{\neq}^{m-1}}\sum_{\substack{(Q_1,\dots,Q_{m-1})\in (\Pn)_{\neq}^{m-1}, \\ |(P_1,\dots,P_{m-1})\cap (Q_1,\dots,Q_{m-1})|\ge 1}} \hspace{-10pt}h\big( (x,u),P_1,\dots,P_{m-1} \big)\,\notag \\
&\qquad \qquad \qquad \qquad \qquad \qquad \qquad \qquad  \times   h\big( (y,v),Q_1,\dots,Q_{m-1} \big)\bigg]  \dif y \dif v \dif x \dif u + R_k^{(n)},  \notag 
\end{align}
where $h$ is the  indicator function for the event that there exists a graph homomorphism from $\ms{T}$ to an induced graph defined on  $\big\{ (0,u), P_1,\dots, P_{m-1} \big\}$, with the root $\ms{r}$  mapped to $(0,u)$. See also \eqref{e:expression.Din} in the Appendix. Moreover, $R_k^{(n)}$ is a negligible remainder term; see \cite[Lemma 10]{hirsch:juhasz:2023} for its detailed calculations.

As argued in \cite{ht24} (see the proof of Proposition 2.2 therein), it suffices to consider the case $\big|(P_1,\dots,P_{m-1})\cap (Q_1,\dots,Q_{m-1})\big|= 1$. Let $\ms{T}_1$ and $\ms{T}_2$ be two identical trees isomorphic to $\ms{T}$, and select a node from each of $\ms{T}_1$ and $\ms{T}_2$, and identify these nodes  together. For the calculation of \eqref{e:cov.T1.Tk}, it is essential to consider the configuration of nodes and edges, defined by 
\begin{align} 
\begin{split}  \label{e:tree.light.tail}
&(z_0,w_0) \to (z_1,w_1) \to \cdots \to (z_d,w_d) \to (x,u), \\
&(z_0,w_0) \to (z_1',w_1') \to \cdots \to (z_e',w_e') \to (y,v), 
\end{split}
\end{align}
where $d, e \ge 0$, and $(z_0,w_0)$ corresponds to the common node between $(P_1,\dots, P_{m-1})$ and $(Q_1, \dots, Q_{m-1})$. 
If $d=0$ (resp.~$e=0$), then $(z_0,w_0)$ directly connects to $(x,u)$ (resp.~$(y,v)$) without any intermediate nodes. To clarify, we assume that the nodes in the first line of \eqref{e:tree.light.tail} are taken from $\ms{T}_1$, while all the nodes in the second line belong to $\ms{T}_2$. 

As in the proof of Proposition \ref{p:exp.var.sub-tree} in the Appendix, we introduce a non-negative integer $\ell_0$ as the number of leaves that have no further children and are connected to $(z_0,w_0)$ by the paths consisting of nodes whose marks are all higher than $w_0$. Additionally, for every $i=1,\dots,d$ (resp.~$j=1,\dots,e$), there are $\ell_i$ (resp.~$\ell_j'$) leaves that have no further children and are connected to $(z_i,w_i)$ (resp.~$(z_j',w_j')$) by the paths  not including $(z_{i-1}, w_{i-1})$ (resp.~$(z_{j-1}', w_{j-1}')$), but consisting of nodes whose marks are all higher than $w_i$ (resp.~$w_j'$). Moreover, let $p$ (resp.~$p'$) denote the number of leaves that have no further children and are connected to $(x,u)$ (resp.~$(y,v)$) by the paths  not including $(z_d,w_d)$ (resp.~$(z_e',w_e')$), but consisting of nodes whose marks are higher than $u$ (resp.~$v$). 

In this setting, if the common node $(z_0,w_0)$ is not taken from any leaves of either $\ms{T}_1$ or $\ms{T}_2$, then 
\begin{equation}  \label{e:ell.and.other.indices1}
2\ell = p + p' + \ell_0 + \sum_{i=1}^d \ell_i + \sum_{j=1}^e \ell_j'.
\end{equation}
If $(z_0,w_0)$ corresponds to a leaf of either $\ms{T}_1$ or $\ms{T}_2$ but not for the other, then we have 
\begin{equation}  \label{e:ell.and.other.indices2}
2\ell = p + p' + \ell_0 + \sum_{i=1}^d \ell_i + \sum_{j=1}^e \ell_j' + 1.
\end{equation}
Finally, if $(z_0,w_0)$ is taken from the leaves of both $\ms{T}_1$ and $\ms{T}_2$, then $\ell_0=0$, so that 
\begin{equation}  \label{e:ell.and.other.indices3}
\ell = p + \sum_{i=1}^d \ell_i + 1 = p' + \sum_{j=1}^e \ell_j' + 1.
\end{equation}
Notice that if $\ell_0 \ge 2$, either \eqref{e:ell.and.other.indices1} or \eqref{e:ell.and.other.indices2} applies, whereas \eqref{e:ell.and.other.indices2}  always holds when $\ell_0 = 1$. Moreover, \eqref{e:ell.and.other.indices3} is valid only when $\ell_0 = 0$. 

In the sequel, we focus only on the case $\ell_0 \ge 1$, as $\ell_0 = 0$ is an easier case that can be treated analogously. We assume further that either $d$ or $e$ is positive, because the case $d = e = 0$ is significantly simpler to study. Without loss of generality, we take $d \ge 1$.  In the setup and notation above, we note 
$$
\max_{0\le i \le d-1} \ms{dist}_n(z_i,   z_{i+1}) \vee \ms{dist}_n(z_d, x)    \vee \ms{dist}_n(z_0, z_1') \vee \max_{1\le j \le e-1} \ms{dist}_n(z_j', z_{j+1}') \vee \ms{dist}_n(z_e', y) \ge \frac{k}{d+e+2}, 
$$
where $\ms{dist}_n(x,y) := \min_{z\in \Z} |x-y+nz|$ denotes the toroidal metric on $\bbT_n$. 
By the symmetry between $(z_i)$ and $(z_j')$, along with the assumption $d \ge 1$, it is sufficient to consider the following two cases:
\begin{align}
\begin{split}  \label{e:case.i.and.case.ii}
&\text{Case} \, (i): \ \ms{dist}_n(z_q, z_{q+1}) \ge\frac{k}{d+e+2} \text{ for some } q=0,\dots,d-1, \\
&\text{Case} \, (ii): \ \ms{dist}_n(z_d, x) \ge \frac{k}{d+e+2}. 
\end{split}
\end{align}
The required arguments in both cases are similar, so we discuss Case $(i)$ only. In Case $(i)$, we have 
$$
\frac{k}{d+e+2} \le \ms{dist}_n(z_q, z_{q+1}) \le \beta w_{q+1}^{-\gamma} w_q^{\gamma-1} \le \beta u^{-\gamma} w_q^{\gamma-1} \le \beta u^{-1}, 
$$
and thus, 
\begin{equation}  \label{e:bdd.w0.u}
w_q\le  \Big( \frac{(d+e+2)\beta}{ku^\gamma} \Big)^{1/(1-\gamma)}\, \one \Big\{  u \le \frac{(d+e+2)\beta}{k} \Big\} =: s(u,k). 
\end{equation}
	Define also $s_\wedge (u,k) := s(u,k)\wedge 1$. 
	For simplicity, we may set, without loss of generality, $(d+e+2)\beta \equiv 1$ and then recover the definition from Section \ref{sec:cliq}.

In this setting, we need to prove that the following  integral is of order  $O(k^{-(2 - 2\ell \g)})$ uniformly over $n$: 
\begin{align}
\begin{split}  \label{e:initial.int}
&\int_{u=0}^{1} \int_{v=0}^1 \int_{w_e'=v}^1 \int_{w_{e-1}'=w_e'}^1 \cdots \int_{w_1'=w_2'}^1 \int_{w_0=w_1'}^1 \\
&\quad \times \int_{w_d=u}^1 \int_{w_{d-1}=w_d}^1 \cdots \int_{w_1=w_2}^1 w_0^{-\ell_0\gamma} \Big( \prod_{i=1}^d w_i^{-\ell_i\gamma}  \Big) \Big( \prod_{j=1}^e (w_j')^{-\ell_j'\gamma} \Big)u^{-p\gamma} v^{-p'\gamma} \one \{ w_q\le s_\wedge(u,k) \} \\
&\quad \times \int_{x=0}^1 \int_{y=k+1}^{k+2} \prod_{j=1}^e \int_{z_j'\in [0,n]} \int_{z_0\in [0,n]} \prod_{i=1}^d \int_{z_i\in [0,n]} \one \big\{ (z_0,w_0) \to (z_1,w_1) \to \cdots \to (z_d,w_d) \to (x,u) \big\} \\
&\qquad \qquad \qquad \qquad \qquad \qquad\qquad \qquad  \qquad \times  \one \big\{ (z_0,w_0) \to (z_1',w_1') \to \cdots  \to (z_e',w_e') \to (y,v) \big\}. 
\end{split}
\end{align}
Here, the integrand in the second line is obtained by repeatedly integrating out both spatial and time coordinates of all the vertices, except for those in \eqref{e:tree.light.tail}. As detailed after \eqref{e:A.u} in the Appendix, additional logarithmic factors may need to be multiplied into the above integrands. However, in such cases, it suffices to bound these logarithmic factors by terms such as $w_0^{-\eta \gamma}$, etc. For simplicity of presentation, we will ignore the appearance of logarithmic factors throughout the proof.

We start with the case $q=0$. Dropping the condition $(z_0,w_0) \to (z_1,w_1)$ from \eqref{e:initial.int} (with $q=0$), we calculate integrals over part of the variables in \eqref{e:initial.int}: 
\begin{align}
A_{u,k}: &= \int_{v=0}^{s_\wedge(u,k)} \int_{w_e'=v}^{s_\wedge(u,k)} \int_{w_{e-1}'=w_e'}^{s_\wedge(u,k)} \cdots \int_{w_1'=w_2'}^{s_\wedge(u,k)} \int_{w_0=w_1'}^{s_\wedge(u,k)} w_0^{-\ell_0\gamma} \prod_{j=1}^e (w_j')^{-\ell_j'\gamma} v^{-p'\gamma} \label{e:A.uk}\\
&\quad \times \int_{y=k+1}^{k+2} \prod_{j=1}^e \int_{z_j'\in [0,n]} \int_{z_0\in [0,n]} \one \big\{ (z_0,w_0) \to (z_1',w_1') \to \cdots  \to (z_e',w_e') \to (y,v) \big\}. \notag
\end{align}
Recall that as in \eqref{e:A.u} in the Appendix, we have suppressed the dependence on $n$ from $A_{u,k}$. 
By integrating out all spatial coordinates in \eqref{e:A.uk}, as well as using  the bound of the type \eqref{e:torus.bdd} in the Appendix, 
\begin{align*}
A_{u,k} &\le  \int_{v=0}^{s_\wedge(u,k)} v^{-(p'+1)\gamma}  \int_{w_e'=v}^{s_\wedge(u,k)} (w_e')^{-1-\ell_e'\gamma} \int_{w_{e-1}'=w_e'}^{s_\wedge(u,k)} (w_{e-1}')^{-1-\ell_{e-1}'\gamma}\times \cdots  \\
&\qquad \qquad \qquad\qquad \qquad \times \int_{w_1'=w_2'}^{s_\wedge(u,k)}  (w_1')^{-1-\ell_1'\gamma} \int_{w_0=w_1'}^{s_\wedge(u,k)} w_0^{-1-(\ell_0-1)\gamma}. 
\end{align*} 
Since we have assumed $\ell_0\ge1$, it holds that 
$
 \int_{w_0=w_1'}^{s_\wedge(u,k)} w_0^{-1-(\ell_0-1)\gamma}\le C(w_1')^{-(\ell_0-1)\gamma}. 
$
By carrying out all the remaining integrals, 
$$
A_{u,k} \le C\int_{v=0}^{s_\wedge(u,k)} v^{-(p'+\ell_0+\sum_{j=1}^e \ell_j')\gamma} =  C s_\wedge(u,k)^{1-(p'+\ell_0+\sum_{j=1}^e \ell_j')\gamma}, 
$$
where the above equality is assured by the constraint $\gamma<1/(2\ell)$. 

We continue by calculating integrals over the remaining coordinates, i.e., $(z_1,w_1), \dots, (z_d,w_d)$, and $(x,u)$ in \eqref{e:initial.int}, to estimate the expression 
\begin{align*}
B_k &:= \int_{u=0}^{1}\int_{w_d=u}^{s_\wedge(u,k)} \int_{w_{d-1}=w_d}^{s_\wedge(u,k)} \cdots \int_{w_1=w_2}^{s_\wedge(u,k)} u^{-p\gamma} s_\wedge(u,k)^{1-(p'+\ell_0+\sum_{j=1}^e \ell_j')\gamma} \prod_{i=1}^d w_i^{-\ell_i\gamma} \\
&\quad \times \int_{x=0}^1 \prod_{i=1}^d \int_{z_i\in [0,n]} \one \big\{ (z_1,w_1) \to \cdots \to (z_d,w_d) \to (x,u) \big\}  \\
&\le C  \int_{u=0}^{1} u^{-(1+p)\gamma} s_\wedge(u,k)^{1-(p'+\ell_0+\sum_{j=1}^e \ell_j')\gamma} \int_{w_d=u}^{s_\wedge(u,k)} w_d^{-1-\ell_d\gamma} \int_{w_{d-1}=w_d}^{s_\wedge(u,k)} w_{d-1}^{-1-\ell_{d-1}\gamma} \times \cdots \\
&\qquad \qquad\qquad \qquad\qquad \qquad  \times \int_{w_2=w_3}^{s_\wedge(u,k)} w_2^{-1-\ell_2\gamma}\int_{w_1=w_2}^{s_\wedge(u,k)} w_1^{-1-(\ell_1-1)\gamma}. 
\end{align*}
Observe that 
\begin{equation}  \label{e:inner.bound1}
\int_{w_1=w_2}^{s_\wedge(u,k)} w_1^{-1-(\ell_1-1)\gamma} \le \begin{cases}
Cw_2^{-(\ell_1-1)\gamma} & \text{if } \ell_1\ge1, \\ 
Cs_\wedge(u,k)^\gamma & \text{if } \ell_1=0. 
\end{cases}
\end{equation}
If $\ell_1\ge1$, it follows from  the first bound in \eqref{e:inner.bound1}, as well as   \eqref{e:bdd.w0.u}, that 
\begin{align*}
B_k &\le C\int_{0}^{1} u^{-(p+\sum_{i=1}^d \ell_i)\gamma} s(u,k)^{1-(p'+\ell_0+\sum_{j=1}^e \ell_j')\gamma} \dif u \\
&= C\int_0^{1/k} u^{-(p+\sum_{i=1}^d \ell_i)\gamma} \Big( \frac{1}{ku^\gamma} \Big)^{\frac{1-(p'+\ell_0+\sum_{j=1}^e \ell_j')\gamma}{1-\gamma}} \dif u \\
&=Ck^{-(2-(p+p'+\ell_0+\sum_{i=1}^d\ell_i +\sum_{j=1}^e \ell_j')\gamma)}  \le   Ck^{-(2-2\ell \gamma)}. 
\end{align*}
The last step above holds as an equality if \eqref{e:ell.and.other.indices1} is satisfied. On the other hand, under the condition \eqref{e:ell.and.other.indices2}, the last step is a strict inequality.  
If $\ell_1 = 0$, using the second bound in \eqref{e:inner.bound1}, we observe that  
\begin{align*}
B_k &\le C \int_{0}^{1} u^{-(1+p+\sum_{i=1}^d \ell_i)\gamma} s(u,k)^{1-(p'-1+\ell_0+\sum_{j=1}^e \ell_j')\gamma} \dif u\\
&=  C\int_0^{1/k} u^{-(1+p+\sum_{i=1}^d \ell_i)\gamma} \Big( \frac{1}{ku^\gamma} \Big)^{\frac{1-(p'-1+\ell_0+\sum_{j=1}^e \ell_j')\gamma}{1-\gamma}} \dif u \\
&=Ck^{-(2-(p+p'+\ell_0+\sum_{i=1}^d\ell_i +\sum_{j=1}^e \ell_j')\gamma)}  \le   Ck^{-(2-2\ell \gamma)}. 
\end{align*}

Now, let us return to Case $(i)$ in \eqref{e:case.i.and.case.ii} and examine the remaining case  $q \in \{1, \dots, d-1\}$. Once again, our goal is to show that  the integral in \eqref{e:initial.int} is bounded by the term of order  $O(k^{-(2-2\ell\g)})$ uniformly over $n$.  We will consider the following distinct cases, based on the size of the time coordinates. Once these cases have been treated, the proof of the assertion will be complete: 
\begin{enumerate}
	\item[] (I) $v > 1/k$,
	\item[] (II) $w_e' > 1/k$ and $v \le 1/k$,
	\item[] (III) $w_{s-1}' > 1/k$ and $w_s' \le 1/k$ for some $s = 2, \dots, e$,
	\item[] (IV) $w_0 > 1/k$ and $w_1' \le 1/k$, 
	\item[] (V) $w_0 \le 1/k$.
\end{enumerate}
We first discuss Cases (I)--(IV), which always indicates that $w_0>1/k$. Removing the condition $(z_q,w_q)\to (z_{q+1}, w_{q+1})$ from \eqref{e:initial.int}, we first need to bound the  integral: 
\begin{align*}
C_{u,k} &:= \int_{v=0}^1 \int_{w_e'=v}^1 \int_{w_{e-1}'=w_e'}^1 \cdots \int_{w_1'=w_2'}^1 \int_{w_0=w_1'\vee \frac{1}{k}}^1 \int_{w_q=0}^{s_\wedge(u,k)} \int_{w_{q-1} = w_q}^1 \cdots \int_{w_1=w_2}^1 \\
&\qquad \qquad w_0^{-\ell_0\gamma} \Big( \prod_{j=1}^e (w_j')^{-\ell_j'\gamma}\Big) \Big( \prod_{i=1}^q w_i^{-\ell_i\gamma} \Big) v^{-p'\gamma} \\
&\quad \times \int_{y=k+1}^{k+2}  \prod_{j=1}^e \int_{z_j'\in [0,n]} \int_{z_0\in [0,n]} \one \big\{ (z_0,w_0) \to (z_1',w_1') \to \cdots  \to (z_e',w_e') \to (y,v) \big\} \\
&\quad \times \prod_{i=1}^q \int_{z_i\in [0,n]} \one \big\{ (z_0,w_0) \to (z_1,w_1) \to \cdots \to (z_q,w_q)  \big\}. 
\end{align*}
By integrating the above expression over all spatial coordinates, 
\begin{align*}
&\int_{y=k+1}^{k+2}  \prod_{j=1}^e \int_{z_j'\in [0,n]} \int_{z_0\in [0,n]} \one \big\{ (z_0,w_0) \to (z_1',w_1') \to \cdots  \to (z_e',w_e') \to (y,v) \big\} \\
&\quad \times \prod_{i=1}^q \int_{z_i\in [0,n]} \one \big\{ (z_0,w_0) \to (z_1,w_1) \to \cdots \to (z_q,w_q)  \big\} \\
&\le C w_0^{2\gamma-2} \Big( \prod_{i=1}^{q-1} w_i^{-1} \Big) w_q^{-\gamma} \prod_{j=1}^e (w_j')^{-1} v^{-\gamma}, 
\end{align*}
and hence, 
\begin{align*}
C_{u,k} &\le  C \int_{v=0}^1 v^{-(1+p')\gamma} \int_{w_e'=v}^1 (w_e')^{-1-\ell_e'\gamma} \int_{w_{e-1}'=w_e'}^1 (w_{e-1}')^{-1-\ell_{e-1}'\gamma} \times \cdots \\
&\qquad \qquad \qquad \qquad\qquad \qquad \times \int_{w_1'=w_2'}^1 (w_1')^{-1-\ell_1'\gamma} \int_{w_0=w_1'\vee \frac{1}{k}}^1 w_0^{2\gamma-2-\ell_0\gamma} \\
&\quad \times \int_{w_q=0}^{s_\wedge(u,k)} w_q^{-(1+\ell_q)\gamma} \int_{w_{q-1}=w_q}^1 w_{q-1}^{-1-\ell_{q-1}\gamma} \times \cdots\times  \int_{w_1=w_2}^1 w_1^{-1-\ell_1\gamma}. 
\end{align*}
Since $\gamma<1/(2\ell)$, it follows that 
$$
\int_{w_q=0}^{s_\wedge(u,k)} w_q^{-(1+\ell_q)\gamma} \int_{w_{q-1}=w_q}^1 w_{q-1}^{-1-\ell_{q-1}\gamma} \times\cdots\times  \int_{w_1=w_2}^1 w_1^{-1-\ell_1\gamma} \le Cs(u,k)^{1-(1+\sum_{i=1}^q\ell_i)\gamma}. 
$$
Moreover, since $\ell_0\ge1$ with $2\gamma-1-\ell_0\gamma \le \gamma-1< \frac{1}{2\ell}-1 <0$, we deduce
$
\int_{w_0=w_1'\vee \frac{1}{k}}^1 w_0^{2\gamma-2-\ell_0\gamma} \le C \Big( w_1' \vee \frac{1}{k} \Big)^{2\gamma-1-\ell_0\gamma}, 
$
so that 
\begin{align}
C_{u,k} &\le C s(u,k)^{1-(1+\sum_{i=1}^q\ell_i)\gamma} \int_{v=0}^1 v^{-(1+p')\gamma} \int_{w_e'=v}^1 (w_e')^{-1-\ell_e'\gamma} \int_{w_{e-1}'=w_e'}^1 (w_{e-1}')^{-1-\ell_{e-1}'\gamma} \times \cdots \label{e:Cuk.upper.bdd} \\
&\qquad \qquad \qquad \times  \int_{w_2'=w_3'}^1 (w_2')^{-1-\ell_2'\gamma} \int_{w_1'=w_2'}^1  (w_1')^{-1-\ell_1'\gamma} \Big( w_1' \vee \frac{1}{k} \Big)^{2\gamma-1-\ell_0\gamma}. \notag
\end{align}

We now claim that 
\begin{equation}  \label{e:bound.C.uk}
C_{u,k} \le Cs(u,k)^{1-(1+\sum_{i=1}^q\ell_i)\gamma}\cdot  k^{-(1-p'-\ell_0-\sum_{j=1}^e \ell_j')\gamma}. 
\end{equation}
To prove this, we first assume the condition in Case (I), that is, $v > 1/k$. Then, the corresponding terms on the right-hand side of \eqref{e:Cuk.upper.bdd} are given by 
\begin{align}
&Cs(u,k)^{1-(1+\sum_{i=1}^q\ell_i)\gamma} \int_{v=1/k}^1 v^{-(1+p')\gamma} \int_{w_e'=v}^1 (w_e')^{-1-\ell_e'\gamma} \int_{w_{e-1}'=w_e'}^1 (w_{e-1}')^{-1-\ell_{e-1}'\gamma} \label{e:v>1/k}\\
&\qquad \qquad \times \cdots \times \int_{w_2'=w_3'}^1 (w_2')^{-1-\ell_2'\gamma} \int_{w_1'=w_2'}^1 (w_1')^{2\gamma-2-(\ell_0+\ell_1')\gamma}.   \notag 
\end{align}
Since $\gamma<1/(2\ell)$ and $\ell_0\ge1$, we have $2\gamma-1-(\ell_0+\ell_1')\gamma<0$, and thus, 
$$
\int_{w_1'=w_2'}^1 (w_1')^{2\gamma-2-(\ell_0+\ell_1')\gamma} \le C (w_2')^{2\gamma-1-(\ell_0+\ell_1')\gamma}. 
$$
Substituting this bound and integrating the resulting expression over $(w_2', \dots, w_d')$, we find that \eqref{e:v>1/k} is upper bounded by  the desired quantity
\begin{align*}
&Cs(u,k)^{1-(1+\sum_{i=1}^q\ell_i)\gamma} \int_{1/k}^1 v^{-1+(1-p'-\ell_0-\sum_{j=1}^e \ell_j')\gamma} \dif v 
\le Cs(u,k)^{1-(1+\sum_{i=1}^q\ell_i)\gamma} \cdot k^{-(1-p'-\ell_0-\sum_{j=1}^e \ell_j')\gamma}. 
\end{align*}
We next turn  to Case (III)  and assume  that $w_{s-1}' >1/k$ and $w_s'\le 1/k$ for some $s=2,\dots,e$. Then, the corresponding terms on the right-hand side of \eqref{e:Cuk.upper.bdd} can be bounded as follows: 
\begin{align*}
&C s(u,k)^{1-(1+\sum_{i=1}^q\ell_i)\gamma} \int_{v=0}^{1/k} v^{-(1+p')\gamma} \int_{w_e'=v}^{1/k} (w_e')^{-1-\ell_e'\gamma} \int_{w_{e-1}'=w_e'}^{1/k} (w_{e-1}')^{-1-\ell_{e-1}'\gamma}\\
&\qquad \times  \cdots\times  \int_{w_{s}'=w_{s+1}'}^{1/k} (w_{s}')^{-1-\ell_{s}'\gamma}  \int_{w_{s-1}'=1/k}^1 (w_{s-1}')^{-1-\ell_{s-1}'\gamma}  \int_{w_{s-2}'=w_{s-1}'}^1 (w_{s-2}')^{-1-\ell_{s-2}'\gamma}\\
&\qquad \times  \cdots  \times \int_{w_{2}'= w_{3}'}^1 (w_{2}')^{-1-\ell_{2}'\gamma}  \int_{w_1'=w_2'}^1  (w_1')^{2\gamma-2-(\ell_0+\ell_1')\gamma} \\
&\le C s(u,k)^{1-(1+\sum_{i=1}^q\ell_i)\gamma} \cdot k^{-(2\gamma-1-\ell_0\gamma-\sum_{j=1}^{s-1} \ell_j'\gamma)}  \\
&\qquad \times \int_{v=0}^{1/k} v^{-(1+p')\gamma} \int_{w_e'=v}^{1/k} (w_e')^{-1-\ell_e'\gamma} \int_{w_{e-1}'=w_e'}^{1/k} (w_{e-1}')^{-1-\ell_{e-1}'\gamma}\times \cdots\times  \int_{w_{s}'=w_{s+1}'}^{1/k} (w_{s}')^{-1-\ell_{s}'\gamma} \\
&\le C s(u,k)^{1-(1+\sum_{i=1}^q\ell_i)\gamma} \cdot k^{-(2\gamma-1-\ell_0\gamma-\sum_{j=1}^{s-1} \ell_j'\gamma)}   \cdot  k^{-1+(1+p'+\sum_{j=s}^e \ell_j')\gamma} \\
&=C s(u,k)^{1-(1+\sum_{i=1}^q\ell_i)\gamma} \cdot k^{-(1-p'-\ell_0-\sum_{j=1}^e \ell_j')\gamma}. 
\end{align*}
For Cases $(II)$ and $(IV)$, the required computations are almost identical to those above, and we will omit detailed arguments. Now, \eqref{e:bound.C.uk} has been established.

It remains to integrate the right-hand side of  \eqref{e:bound.C.uk} with respect to  the remaining coordinates in \eqref{e:initial.int}. 
\begin{align*}
D_k &:= \int_{u=0}^{1} \int_{w_d=u}^{s_\wedge(u,k)}\int_{w_{d-1}=w_d}^{s_\wedge(u,k)} \dots \int_{w_{q+1}=w_{q+2}}^{s_\wedge(u,k)}  s(u,k)^{1-(1+\sum_{i=1}^q\ell_i)\gamma} \cdot  k^{-(1-p'-\ell_0-\sum_{j=1}^e \ell_j')\gamma} \\
&\quad \times \Big( \prod_{i=q+1}^d w_i^{-\ell_i\gamma} \Big)u^{-p\gamma}  \int_{x=0}^1 \prod_{i=q+1}^d \int_{z_i\in [0,n]} \one \big\{ (z_{q+1}, w_{q+1}) \to \cdots \to (z_d,w_d)\to (x,u) \big\} \\
&\le C k^{-(1-p'-\ell_0-\sum_{j=1}^e \ell_j')\gamma}  \int_{u=0}^{1} s(u,k)^{1-(1+\sum_{i=1}^q\ell_i)\gamma} u^{-(p+1)\gamma} \int_{w_d=u}^{s_\wedge(u,k)} w_d^{-1-\ell_d\gamma} \\
&\quad \times \int_{w_{d-1}=w_d}^{s_\wedge(u,k)} w_{d-1}^{-1-\ell_{d-1}\gamma} \times \cdots\times  \int_{w_{q+2}=w_{q+3}}^{s_\wedge(u,k)} w_{q+2}^{-1-\ell_{q+2}\gamma} \int_{w_{q+1}=w_{q+2}}^{s_\wedge(u,k)} w_{q+1}^{-1-(\ell_{q+1}-1)\gamma}. 
\end{align*}
The innermost  integral is bounded as
\begin{equation}  \label{e:inner.bound2}
\int_{w_{q+1}=w_{q+2}}^{s_\wedge(u,k)} w_{q+1}^{-1-(\ell_{q+1}-1)\gamma} \le \begin{cases}
Cw_{q+2}^{-(\ell_{q+1}-1)\gamma} & \text{if } \ell_{q+1}\ge1, \\[5pt]
Cs(u,k)^\gamma & \text{if } \ell_{q+1}=0. 
\end{cases}
\end{equation}
If $\ell_{q+1}\ge1$, it follows from the first bound in \eqref{e:inner.bound2} that 
\begin{align*}
D_k &\le  C k^{-(1-p'-\ell_0-\sum_{j=1}^e \ell_j')\gamma}  \int_{0}^{1} s(u,k)^{1-(1+\sum_{i=1}^q\ell_i)\gamma} u^{-(p+1)\gamma} \cdot u^{-(\sum_{i=q+1}^d\ell_i-1)\gamma} \dif u \\
&= C k^{-(1-p'-\ell_0-\sum_{j=1}^e \ell_j')\gamma}  \times k^{-\frac{1-(1+\sum_{i=1}^q \ell_i)\gamma}{1-\gamma}} \int_0^{1/k} u^{-(p+\sum_{i=q+1}^d \ell_i)\gamma-\gamma \,\frac{1-(1+\sum_{i=1}^q \ell_i)\gamma}{1-\gamma}} \dif u \\
&=  C k^{-(1-p'-\ell_0-\sum_{j=1}^e \ell_j')\gamma} \times k^{-(2-(p+1+\sum_{i=1}^d \ell_i)\gamma)} \\
&= Ck^{-(2-(p+p'+\ell_0+\sum_{i=1}^d \ell_i + \sum_{j=1}^e \ell_j')\gamma)} \le C k^{-(2-2\ell\gamma)}, 
\end{align*}
where the last inequality follows from \eqref{e:ell.and.other.indices1} and \eqref{e:ell.and.other.indices2}. 
 On the other hand, even when $\ell_{q+1}=0$, by using the second bound in \eqref{e:inner.bound2} and following the same calculations as above, one can obtain the same bound $D_k\le Ck^{-(2-2\ell\gamma)}$.

Since we have completed all the necessary arguments for Cases (I)--(IV), we are left to deal with Case (V): $w_0\le 1/k$. Removing the condition $(z_0, w_0) \to (z_1,w_1)$ from \eqref{e:initial.int}, we begin with the integral concerning some of the coordinates in \eqref{e:initial.int}.
\begin{align*}
E_{u,k} &:= \int_{v=0}^{1/k} \int_{w_e'=v}^{1/k} \int_{w_{e-1}'=w_e'}^{1/k} \cdots \int_{w_1'=w_2'}^{1/k} \int_{w_0=w_1'}^{1/k} v^{-p'\gamma} \Big(\prod_{j=1}^e (w_j')^{-\ell_j'\gamma} \Big) w_0^{-\ell_0\gamma} \\
&\quad \times \int_{y=k+1}^{k+2} \prod_{j=1}^e \int_{z_j'\in [0,n]} \int_{z_0\in [0,n]} \one \big\{ (z_0,w_0)\to (z_1', w_1') \to \cdots \to (z_e', w_e') \to (y,v)\big\} \\
&\le  C \int_{v=0}^{1/k}  v^{-(1+p')\gamma}  \int_{w_e'=v}^{1/k} (w_e')^{-1-\ell_e'\gamma} \int_{w_{e-1}'=w_e'}^{1/k} (w_{e-1}')^{-1-\ell_{e-1}'\gamma} \times  \cdots  \\
&\qquad \qquad \qquad \qquad \qquad \qquad \times \int_{w_1'=w_2'}^{1/k}  (w_1')^{-1-\ell_1'\gamma} \int_{w_0=w_1'}^{1/k} w_0^{-1-(\ell_0-1)\gamma}. 
\end{align*}
Since $\ell_0\ge1$ and $\int_{w_1'}^{1/k}w_0^{-1-(\ell_0-1)\gamma} \dif w_0 \le C (w_1')^{-(\ell_0-1)\gamma}$, we have 
$$
E_{u,k}\le C \int_{0}^{1/k} v^{-(p'+\ell_0+\sum_{j=1}^e \ell_j')\gamma}\dif v = C k^{-1+(p'+\ell_0+\sum_{j=1}^e \ell_j')\gamma}. 
$$
Substituting this bound, we  calculate the integral over all the  remaining coordinates, 
\begin{align*}
F_k &:= \int_{u=0}^{1/k} \int_{w_d=u}^{1/k} \int_{w_{d-1}=w_d}^{1/k} \cdots \int_{w_1=w_2}^{1/k} k^{-1+(p'+\ell_0+\sum_{j=1}^e \ell_j')\gamma} u^{-p\gamma} \prod_{i=1}^d w_i^{-\ell_i\gamma} \\
&\quad \times \int_{x=0}^1 \prod_{i=1}^d \int_{z_i\in [0,n]} \one \big\{ (z_1,w_1)\to \cdots \to (z_d,w_d)\to (x,u) \big\}  \\
&\le C k^{-1+(p'+\ell_0+\sum_{j=1}^e \ell_j')\gamma}  \int_{u=0}^{1/k} u^{-(p+1)\gamma} \int_{w_d=u}^{1/k}  w_d^{-1-\ell_d\gamma} \int_{w_{d-1}=w_d}^{1/k} w_{d-1}^{-1-\ell_{d-1}\gamma} \times \cdots \\
&\qquad \qquad \qquad \qquad \qquad \qquad \times \int_{w_2=w_3}^{1/k} w_2^{-1-\ell_2\gamma} \int_{w_1=w_2}^{1/k} w_1^{-1-(\ell_1-1)\gamma}.  
\end{align*}
The innermost integral satisfies 
\begin{equation}  \label{e:inner.bound3}
\int_{w_1=w_2}^{1/k} w_1^{-1-(\ell_1-1)\gamma} \le \begin{cases}
Cw_2^{-(\ell_1-1)\gamma} & \text{if } \ell_1\ge1, \\[5pt]
Ck^{-\gamma} & \text{if } \ell_1=0. 
\end{cases}
\end{equation}
If $\ell_1\ge1$, then by  \eqref{e:inner.bound3}, 
\begin{align*}
F_k \le C k^{-1+(p'+\ell_0+\sum_{j=1}^e \ell_j')\gamma} \int_{0}^{1/k} u^{-(p+\sum_{i=1}^d \ell_i)\gamma} \dif u 
= Ck^{-(2-(p+p'+\ell_0+\sum_{i=1}^d \ell_i + \sum_{j=1}^e \ell_j')\gamma)}\le  C k^{-(2-2\ell\gamma)}. 
\end{align*}
 If $\ell_1=0$, then by  \eqref{e:inner.bound3} again, we have that 
\begin{align*}
F_k &\le C k^{-1+(p'-1+\ell_0+\sum_{j=1}^e \ell_j')\gamma} \int_{0}^{1/k} u^{-(p+1+\sum_{i=1}^d \ell_i)\gamma} \dif u \\
&= Ck^{-(2-(p+p'+\ell_0+\sum_{i=1}^d \ell_i + \sum_{j=1}^e \ell_j')\gamma)} \le C k^{-(2-2\ell\gamma)}. 
\end{align*}

\noindent \underline{\textit{Proof of $(ii)$}}: Using the Mecke formula for Poisson point processes, one can write 
\begin{equation}   \label{e:Var(T1)}
\ms{Var}(T_1^{(n)}) = \int_0^1 \E \big[ \inDt(u)^2 \big] \dif u + \int_{[0,1]^4} \ms{Cov} \big( \inDt(x,u), \inDt(y,v) \big) \dif x \dif u \dif y \dif v + C_1^{(n)},
\end{equation}
where $C_1^{(n)}$ is a positive remainder term  corresponding to the integral of the expectation of the count of subtrees rooted at $(x,u)$ (resp.~$(y,v)$) that contain a node $(y,v)$ (resp.~$(x,u)$). A more detailed expression for $C_1^{(n)}$, along with its analysis, can be found in \cite[Lemma 10]{hirsch:juhasz:2023}. 
By the Harris-FKG inequality, the covariance term above is non-negative, and, hence, 
$$
\ms{Var}(T_1^{(n)}) \ge \int_0^1 \E \big[ \inDt(u)^2 \big] \dif u\ge \int_0^1  \mu_{n,\ms{T}} (u)^2 \dif u. 
$$
By appealing to Fatou's lemma, 
$
\liminf_{n\to\infty} \ms{Var}(T_1^{(n)}) \ge  \int_0^1  \mu_{\ms{T}}(u)^2 \dif u>0. 
$
By an additional application of the Harris-FKG inequality, as well as the fact that $T_1 \stackrel{d}{=} \cdots \stackrel{d}{=} T_n^{(n)}$, we have $\ms{Var}(\mc T_{n, T}) \ge n \ms{Var}(T_1^{(n)})$, and now, the proof of $(ii)$ has been completed. 
\end{proof}

\begin{proof}[Proof of Theorem \ref{t:CLT.sub-tree}]
Our proof adopts an approach  similar to Theorems 4.1 and 4.8 in \cite{oliveira:2012}. We first regroup  $\bar T_i^{(n)} = T_i^{(n)} - \E \big[T_i^{(n)}  \big]$, $i=1,\dots, n$, into multiple blocks of size $\ell_n$, where $\ell_n\to\infty$ and $\ell_n=o(n)$ as $n\to\infty$. Then, $m_n:=\lfloor n/\ell_n \rfloor$ represents, asymptotically, the number of blocks (here $\lfloor z \rfloor$ is the largest integer that does not exceed $z$). We define  $s_n:=\sqrt {\ms{Var}(\mc T_{n, T})}$ and 
$$
Y_{j,\ell_n}^{(n)} := \sum_{i=(j-1)\ell_n+1}^{j\ell_n} \bar T_i^{(n)}, \ \ j=1,\dots,m_n, \ \ \ Y_{m_n+1,\ell_n}^{(n)} := \sum_{i=m_n\ell_n+1}^{n} \bar T_i^{(n)}. 
$$

We first claim that as $n\to\infty$, 
$$
\E \Big[ \exp \Big\{i\frac{u}{s_n}\big(\mc T_{n, T}-\E[\mc T_{n, T}]\big)\Big\} \Big] \sim  \prod_{j=1}^{m_n} \E \Big[ \exp\Big\{ i\frac{u}{s_n} Y_{j,\ell_n}^{(n)} \Big\} \Big], \ \ \ u \in \R. 
$$
For the proof, it is sufficient to show that for any $u\in \R$, 
\begin{align}
&\bigg| \, \E \Big[ \exp \Big\{i\frac{u}{s_n}\big(\mc T_{n, T}-\E[\mc T_{n, T}]\big)\Big\} \Big] - \E \Big[ \exp \Big\{i\frac{u}{s_n}\sum_{j=1}^{m_n}Y_{j,\ell_n}^{(n)}\Big\} \Big]  \,   \bigg| \to 0, \ \ \ n\to\infty, \label{e:ch.f.conv1}\\
&\bigg| \, \E \Big[ \exp \Big\{i\frac{u}{s_n}\sum_{j=1}^{m_n}Y_{j,\ell_n}^{(n)}\Big\} \Big] - \prod_{j=1}^{m_n} \E \Big[ \exp\Big\{ i\frac{u}{s_n} Y_{j,\ell_n}^{(n)} \Big\} \Big] \,   \bigg| \to 0, \ \ \ n\to\infty, \label{e:ch.f.conv2}
\end{align}
Since $|e^{is}-e^{it}| \le |s-t|$ for $s,t\in \R$, the Cauchy-Schwarz inequality and Lemma \ref{l:cov.var.bdd.tree} $(ii)$ give that 
$$
\bigg| \, \E \Big[ \exp \Big\{i\frac{u}{s_n}\big(\mc T_{n, T}-\E[\mc T_{n, T}]\big)\Big\} \Big] - \E \Big[ \exp \Big\{i\frac{u}{s_n}\sum_{j=1}^{m_n}Y_{j,\ell_n}^{(n)}\Big\} \Big]  \,   \bigg|  \le C|u| \Big\{ n^{-1} \E \big[ (Y_{m_n+1, \ell_n}^{(n)})^2 \big]  \Big\}^{1/2}. 
$$
By Lemma \ref{l:cov.var.bdd.tree} $(i)$, we see  that $\E \big[ (Y_{m_n+1, \ell_n}^{(n)})^2 \big] \le C \ell_n(1+ u_n(1)) \le C\ell_n$, and thus, \eqref{e:ch.f.conv1} converges to $0$ as $n\to\infty$. 

For the proof of \eqref{e:ch.f.conv2}, we employ Newman's inequality (see  Theorem \ref{t:newman.thm} in the Appendix \ref{app:C}), to ensure  that \eqref{e:ch.f.conv2} can be upper bounded by 
\begin{align*}
\frac{u^2}{s_n^2}\, \sum_{j=1}^{m_n-1}\sum_{k=j+1}^{m_n} \ms{Cov} (Y_{j,\ell_n}^{(n)}, \, Y_{k,\ell_n}^{(n)}) &\le \frac{Cu^2 m_n}{s_n^2}\, \sum_{p=1}^{\ell_n} \sum_{q=\ell_n+1}^n \ms{Cov} (T_p^{(n)}, T_q^{(n)}) \\
&\le  \frac{Cu^2m_n\ell_n}{s_n^2}\cdot \frac{1}{\ell_n}\sum_{p=1}^{\ell_n} u_n(p) \le  \frac{C}{\ell_n}\sum_{p=1}^{\ell_n} p^{-(1-2\ell\gamma)} \to 0, \ \ \text{as } n\to\infty. 
\end{align*}
At the last inequality, we have used Lemma \ref{l:cov.var.bdd.tree} $(ii)$ and \eqref{e:u.nk.bdd}. 

Now, the  proof of Theorem \ref{t:CLT.sub-tree} will be completed if one can demonstrate that 
$$
\prod_{j=1}^{m_n} \E \Big[\exp\Big\{ i\frac{u}{s_n}Y_{j,\ell_n}^{(n)} \Big\} \Big] \to e^{-u^2/2}, \ \ \text{for all } u \in \R; 
$$
equivalently, $s_n^{-1} \sum_{j=1}^{m_n} \widetilde Y_{j,\ell_n}^{(n)} \stackrel{d}{\to} \mathcal N(0,1)$, where $\big(\widetilde Y_{j,\ell_n}^{(n)}\big)_{j=1}^{m_n}$ is an i.i.d.~copy of $\big(Y_{j,\ell_n}^{(n)}\big)_{j=1}^{m_n}$. For this purpose, we note that 
$
s_n^2 \sim \ms{Var}\Big( \sum_{j=1}^{m_n} \widetilde Y_{j,\ell_n}^{(n)} \Big) = m_n \ms{Var}(Y_{1,\ell_n}^{(n)}), \ \ \text{as } n\to\infty. 
$
Thus, by  Slutsky's theorem, it suffices to verify that as $n\to\infty$, 
\begin{equation}  \label{e:goal.clt}
\frac{1}{\sqrt{m_n \ms{Var}(Y_{1,\ell_n}^{(n)})}} \sum_{j=1}^{m_n} \widetilde Y_{j,\ell_n}^{(n)} \stackrel{d}{\to} \mathcal N(0,1). 
\end{equation}
According to the Lindeberg-Feller central limit theorem (see Theorem 2 on p.~334 in \cite{shiryaev:1996}), it is sufficient to show   that for every $\epsilon>0$, 
\begin{equation}  \label{e:Lindeberg.CLT}
\frac{1}{m_n\ms{Var}(Y_{1,\ell_n}^{(n)})}\, \sum_{j=1}^{m_n} \E \bigg[ (Y_{j,\ell_n}^{(n)})^2 \, \one \Big\{ |Y_{j,\ell_n}^{(n)}| \ge \epsilon \sqrt{m_n\text{Var}(Y_{1,\ell_n}^{(n)})}  \Big\}\bigg] \to 0, \ \ \ n\to\infty;
\end{equation}
equivalently, 
$\frac{1}{s_n^2}\,  \sum_{j=1}^{m_n} \E \bigg[ (Y_{j,\ell_n}^{(n)})^2 \, \one \Big\{ |Y_{j,\ell_n}^{(n)}| \ge \epsilon s_n \Big\}\bigg] \to 0$,  $n\to\infty. $
We observe that we have the bound $(Y_{j,\ell_n}^{(n)})^2 \le \ell_n \sum_{i=(j-1)\ell_n+1}^{j\ell_n}(\bar T_i^{(n)})^2$. Thus, together with the union bound, \eqref{e:Lindeberg.CLT} is upper bounded by 
\begin{align*}
&\frac{1}{s_n^2} \sum_{j=1}^{m_n}\ell_n  \sum_{i=(j-1)\ell_n+1}^{j\ell_n} \sum_{k=(j-1)\ell_n+1}^{j\ell_n} \E \Big[ (\bar T_i^{(n)})^2 \one \big\{ |\bar T_i^{(n)}| \ge \epsilon s_n/\ell_n \big\} \Big] =\frac{m_n\ell_n^3}{s_n^2} \E \Big[ (\bar T_1^{(n)})^2 \one \big\{ |\bar T_1^{(n)}| \ge \epsilon s_n/\ell_n \big\} \Big]. 
\end{align*}
Here, we assume temporarily that there exists $\delta\in (0,1)$ such that 
$
\sup_{n} \E \big[ (\bar T_1^{(n)})^{2+\delta} \big] <\infty$.
Then, H\"older's inequality yields that 
$$
	\E \Big[ (\bar T_1^{(n)})^2 \one \big\{ |\bar T_1^{(n)}| \ge \epsilon s_n/\ell_n \big\} \Big] \le \Big\{  \E \big[ (\bar T_1^{(n)})^{2+\delta} \big]  \Big\}^{\f2{2+\delta}} \P \Big( |\bar T_1^{(n)}| \ge \frac{\epsilon s_n}{\ell_n} \Big)^{\delta/(2+\delta)} \le C \P \Big( |\bar T_1^{(n)}| \ge \frac{\epsilon s_n}{\ell_n} \Big)^{\f\delta{2+\delta}}. 
$$
By Markov's inequality, 
$
\P \Big( |\bar T_1^{(n)}| \ge \frac{\epsilon s_n}{\ell_n} \Big) \le  \frac{2\ell_n}{\epsilon s_n}\int_0^1 \mu^{(n)}(u)\dif u \le   \frac{C\ell_n}{s_n}. 
$
Substituting them, we get that
$$
\frac{1}{s_n^2}\,  \sum_{j=1}^{m_n} \E \bigg[ (Y_{j,\ell_n}^{(n)})^2 \, \one \Big\{ |Y_{j,\ell_n}^{(n)}| \ge \epsilon s_n \Big\}\bigg]  \le C\, \frac{m_n\ell_n^{3+\frac{\delta}{2+\delta}}}{s_n^{2+\frac{\delta}{2+\delta}}} \le C \Big( \frac{\ell_n^{4+3\delta}}{n^{\delta/2}} \Big)^{1/(2+\delta)}, 
$$
where the last inequality follows from Lemma \ref{l:cov.var.bdd.tree} $(ii)$. By taking $\ell_n\to \infty$ to ensure that $\ell_n = o\big( n^{\delta/(2(4+3\delta))} \big)$, the final term above tends to $0$ as $n\to\infty$. 

To conclude the proof, it remains to verify $\sup_{n} \E \big[ (\bar T_1^{(n)})^{2+\delta} \big] <\infty$. Here, it is enough to show that $\sup_{n} \E \big[ ( T_1^{(n)})^{2+\delta} \big] <\infty$. By H\"older's inequality, 
\begin{align*}
\E \big[ (T_1^{(n)})^{2+\delta} \big] &\le \E \Big[ \mathcal P\big(  [0,1]^2\big)^{2+\delta} \max_{P\in \mathcal P\cap [0,1]^2} \inDt(P)^{2+\delta}    \Big] \\
&\le \bigg\{ \E\Big[\mathcal P\big(  [0,1]^2\big)^{(2+\delta)(1+\epsilon_0)/\epsilon_0} \Big]\bigg\}^{\epsilon_0/(1+\epsilon_0)} \bigg\{ \E\Big[ \max_{P\in \mathcal P\cap [0,1]^2} \inDt(P)^{(2+\delta)(1+\epsilon_0)} \Big]   \bigg\}^{1/(1+\epsilon_0)}, 
\end{align*}
where $\epsilon_0>0$ satisfies $(2+\delta)(1+\epsilon_0)<3$. Since the $(2+\delta)(1+\epsilon_0)/\epsilon_0$th moment of $\mathcal P\big( [0,1]^2 \big)$ is clearly finite, we only demonstrate that 
$
\sup_{n} \E \Big[  \max_{P\in \mathcal P\cap [0,1]^2} \inDt(P)^{(2+\delta)(1+\epsilon_0)} \Big]<\infty. 
$
By applying again H\"older's inequality and Proposition \ref{p:exp.var.sub-tree} $(iii)$, 
$$
 \E \Big[  \max_{P\in \mathcal P\cap [0,1]^2} \inDt(P)^{(2+\delta)(1+\epsilon_0)} \Big] \le \E\Big[ \sum_{P\in \mathcal P\cap [0,1]^2}\hspace{-.4cm} \inDt(P)^{(2+\delta)(1+\epsilon_0)} \Big] 
 = \int_0^1 \E \big[ \inDt(u)^{(2+\delta)(1+\epsilon_0)} \big] \dif u.$$
	 Now, 
	 $\E \big[ \inDt(u)^{(2+\delta)(1+\epsilon_0)} \big] 
 \le\Big\{ \E \big[ \inDt(u)^3 \big] \Big\}^{(2+\delta)(1+\epsilon_0)/3}  \le C (u^{-3\ell\gamma-\eta})^{(2+\delta)(1+\epsilon_0)/3}$. 
It can be assured from the assumption $0<\gamma<1/(2\ell)$ that the integral over the final term becomes finite if we set  $\delta, \epsilon_0$, and $\eta$ sufficiently small. 
\end{proof}

\newpage
\begin{appendix}
\section{Proof of Lemma \ref{lem:cov2}}    \label{app:A}
%
%
To prove Lemma \ref{lem:cov2}, we establish an integral bound that will be used frequently in the following.
\bel[Integral bound]
\label{lem:int}
Let $0<\g < 1/2$ and $a \in (3- \g - 1/\g, 2 -\g)$. Then,    
$$\int_0^1 w^{-a} \big(w \wedge (w^{1- \g} n_1)^{-1/\g}\big)^{1 - \g} \d w \in O(n^{a + \g -2}).$$
\enl
\bep
First, if $w \le n_1^{-1}$, then we obtain an upper bound of order $O(n^{a + \g - 2})$. If $w \ge n_1^{-1}$, we again obtain a bound of order  $n^{-(1 - \g)/\g}\int_{n_1^{-1}}^1 w^{-(1 - \g)^2/\g - a} \d w  \in O( n^{ a +\g - 2}).$
\enp

Now, we are ready to prove Lemma \ref{lem:cov2}.

%
%
\bep[Proof Lemma \ref{lem:cov2} -- part $(i)$]
Let $(Z, W)$ and $(Z', W')$ be the highest points of the $k$-clique and the $\ell$-clique, respectively. We distinguish two cases depending on whether or not one of the highest points is in the down-neighborhood of the other one. Note that it could also be that $(Z, W) = q$ or $(Z', W') = (0, u)$. However, we do not consider these cases  since the arguments are easier than in the other case.

\emph{Case 1.} We assume without loss of generality that $(Z', W') \in N^\da((Z, W))$, since the other  case $(Z, W) \in N^\da((Z', W'))$ is analogous. Note that any clique point is in the down-neighborhood of $(Z, W)$ or $(Z', W')$. Since the number of points in the down-neighborhood is Poisson distributed with parameter of constant order, this provides a constant contribution. Hence, we need to bound the expected number of configurations  $$(0, U) \leftarrow (Z, W) \to (Z', W') \to q=:(Y, V),$$ such that 
at least one of these edges has a length of at least $n_1$. 

We discuss the following  three cases separately. All the above computations below  are based on the same techniques as in   \eqref{eq:neighb-technique} and  \eqref{eq:exp1} and we do not put all details.
First, 	consider the case \emph{$|Z| \ge n_1$}. Then, using the estimate from Lemma \ref{lem:int} with $a = 1 -\g$,
\begin{align*}
	&	\int_0^1\int_0^w (u^{-\g}w^{\g -1} - n_1)_+ \d u \d w  \le C\int_0^1 w^{\g - 1}(w\wedge (w^{1 - \g}n_1)^{-1/\g})^{1 - \g}  \d w \in O(n^{-1}).
\end{align*}
Second, consider the case where  \emph{$|Z' - Z| \ge n_1$}. Then, we need to bound the expression
\begin{align*}
	&	 \int_0^1 \int_0^w  \big(w^{\g - 1} (w')^{-\g} - n_1\big)_+ \d w'\d w
	= C\int_0^1  w^{\g - 1}(w \wedge (w^{1-\g}n_1)^{-1/\g})^{ 1- \g}\d w.
\end{align*}
As above, this is of order $O(n^{-1})$.

Finally, consider the case \emph{$|Z' - Y| \ge n_1$}. Then, we need to bound the expression
\begin{align*}
	&	 \int_0^1 \int_0^w \int_0^{w'} \int_0^wu^{-\g}w^{2\g - 2}(w')^{-\g} \big((w')^{\g - 1} v^{-\g} - n_1\big)_+ \d u\d v\d w'\d w\\
	&= C\int_0^1 \int_0^w \int_0^{w'} w^{\g - 1}(w')^{-\g} \big((w')^{\g - 1} v^{-\g} - n_1\big)_+ \d v\d w'\d w\\
	&= C\int_0^1 (w')^{-1}  (w' \wedge ((w')^{1- \g}n_1)^{-1/\g})^{1-\g} \d w'.
\end{align*}
Applying Lemma \ref{lem:int} with $a=1$ shows that this expression is in $O(n^{-(1 - \g)})$.
\smallskip

\emph{Case 2.} The two highest points $(Z, W)$ and $(Z', W')$ are different and none of them belongs  to the other clique. However, by the covariance property, we know that there exists at least one common point, denoted  $(Z'', W'')$, between the two cliques.  Hence, we need to bound the expected number of configurations 
\begin{align}
	\label{eq:uyv}
	(0, U) \leftarrow (Z, W) \to (Z'', W'') \leftarrow (Z', W') \to (Y, V), 
\end{align}
such that at least one of these edges has a length of at least $n_1$.  

We discuss the following four cases separately.
First, consider the case where $|Z| \ge n_1$. Then, by Lemma \ref{lem:int} with $a = \g$,
\begin{align*}
	\int_0^1\int_0^w \ w^{-\g}(u^{-\g}w^{\g - 1} - n_1)_+ \d u \d w  = C\int_0^1 w^{-\g}(w \wedge(w^{1 - \g}n_1)^{-1/\g})^{1 - \g} \d w \in O(n^{-2(1- \g)}).
\end{align*}
Second, consider the case where $|Z' - Y| \ge n_1$. Then, by Lemma \ref{lem:int} with $a = \g$,
\begin{align*}
\int_0^1\int_0^{w'}(w')^{-\g}(v^{-\g}(w')^{\g - 1} - n_1)_+ \d w' \d v  = C\int_0^1 (w')^{-\g}(w' \wedge((w')^{1 - \g}n_1)^{-1/\g})^{1 - \g} \d w' \in O(n^{-2(1 - \g)}).
\end{align*}

Third, consider the case where $|Z - Z''| \ge n_1$. Then, by Lemma \ref{lem:int} with $a = \g$,
$$\int_0^1\int_0^w (w'')^{-\g}(w^{\g - 1}(w'')^{-\g} - n_1)_+ \d w'' \d w = C\int_0^1 w^{\g - 1} (w\wedge(w^{1 - \g}n_1)^{-1/\g})^{1 - 2\g} \d w \in O(n^{-(1 - \g)})$$
Finally, consider the case where $|Z' - Z''| \ge n_1$. Then, we need to bound the integral
$$\int_0^1\int_0^w (w'')^{-\g}((w')^{\g - 1}(w'')^{-\g} - n_1)_+ \d w'' \d w'.$$
Hence, the calculations are identical to the second case.
\enp

Now, we prove part $(ii)$. Similarly to part $(i)$, this involves several case distinctions. 
\bep[Proof Lemma \ref{lem:cov2} -- part $(ii)$] As in the proof of part $(i)$, to avoid redundancies, we do not treat explicitly the cases $(Z, W) = q$ or $(Z', W') = (0, u)$. We first consider again the configuration of the form
	$$(0, U) \leftarrow (Z, W) \to (Z'', W'') \leftarrow (Z', W') \to (Y, V)$$
	as in \eqref{eq:uyv}. If one of the edges at $(0, U)$ or $(Y, V)$ has length more  than $n_1$, then we conclude as in part $(i)$. 

Hence, it remains to consider the case where one of the edges $(Z, W) \to (Z'', W'')$ or $(Z', W') \to (Z'', W'')$ is at least of length $n_1$. We only deal with the  former case, since the argument for the latter is similar. If this edge goes to $(Z', W')$ or  $(Z'', W'')$, then  we can again argue as in part $(i)$. Therefore, it remains to deal with the case where the long edge goes to a different vertex.

First, the expected number of edges of the form $(z,w)\to(\bar Z, \bar W)$ of length at least $n_1$ is of order at most
\begin{align}
	\label{eq:cov21}
	Cw^{\g - 1}(w\wedge (w^{1 - \g}n_1)^{-1/\g})^{1 - \g}.
\end{align}
Second, the expected number of  configurations 
$(z, w) \to (Z'', W'')\leftarrow (Z', W')$
is of order 
\begin{align}
	\label{eq:cov22}
	C w^{\g - 1}\int_0^w (w'')^{-2\g} \d w \in O\big(w^{-\g}\big).
\end{align}
Hence, combining \eqref{eq:cov21} and \eqref{eq:cov22} yields the upper bound 
$C\int_0^1 w^{-1}(w\wedge (w^{1 - \g}n_1)^{-1/\g})^{1 - \g} \d w$. By Lemma \ref{lem:int} with $a = 1$, this expression is of order $O(n^{-(1 - \g)})$, thereby concluding the proof.
\enp
%
%
\section{Proof of Proposition  \ref{p:exp.var.sub-tree}}   \label{app:B}
\label{sec:rem}
\begin{proof}[Proof of Proposition \ref{p:exp.var.sub-tree}]
The proof of  part $(i)$  is provided in  \cite[Proposition 2.1]{ht24}. For the proof of part $(ii)$, given a directed tree $\ms{T}$ on $m$ vertices with root $\ms{r}$, we start by expressing   $\inDt(u)$  as follows. 
\begin{equation}  \label{e:expression.Din}
\inDt(u) = \sum_{(P_1,\dots,P_{m-1})\in (\Pn)_{\neq}^{m-1}} h\big( (0,u), P_1,\dots, P_{m-1}\big), 
\end{equation}
where $(\Pn)_{\neq}^{m-1} =\big\{ (P_1,\dots,P_{m-1})\in \Pn^{m-1}: P_i \neq P_j \text{ for } i \neq j \big\}$ and  $h$ is an indicator function for the event that there exists a graph homomorphism from $\ms{T}$ to an induced graph defined on  $\big\{ (0,u), P_1,\dots, P_{m-1} \big\}$, with the root $\ms{r}$  mapped to $(0,u)$. 

By the Mecke formula for Poisson point processes, 
\begin{align}
\begin{split}  \label{e:var.mecke}
\ms{Var}\big( \inDt(u)\big) &=\E \bigg[ \sum_{(P_1, \dots, P_{m-1})\in (\Pn)_{\neq}^{m-1}}\sum_{(Q_1,\dots,Q_{m-1})\in (\Pn)_{\neq}^{m-1}} \hspace{-10pt}h\big( (0,u),P_1,\dots,P_{m-1} \big)\, \\
&\qquad \qquad\qquad\qquad\qquad\qquad\qquad\qquad\times h\big( (0,u),Q_1,\dots,Q_{m-1} \big)  \bigg] - \mu_{n,\ms{T}}(u)^2 \\
&=\mu_{n,\ms{T}}(u) + \E \bigg[ \sum_{(P_1, \dots, P_{m-1})\in (\Pn)_{\neq}^{m-1}} \hspace{-10pt}\sum_{\substack{(Q_1,\dots,Q_{m-1})\in (\Pn)_{\neq}^{m-1}, \\ 1\le |(P_1,\dots,P_{m-1})\cap (Q_1,\dots,Q_{m-1})|\le m-2}} \hspace{-20pt}h\big( (0,u),P_1,\dots,P_{m-1} \big)\, \\
&\qquad \qquad\qquad\qquad\qquad\qquad\qquad\qquad\qquad \qquad \times h\big( (0,u),Q_1,\dots,Q_{m-1} \big)  \bigg]. 
\end{split}
\end{align}
The asymptotics of $\mu_{n,\ms{T}}(u)$ is already given in part $(i)$, so we focus on the second term above. As discussed in the proof of Proposition 2.2 in \cite{ht24}, it suffices to consider the case where $|(P_1,\dots,P_{m-1})\cap (Q_1, \dots, Q_{m-1})|=1$. Let $\ms T_i$, $i=1,2$, be a copy of $\ms T$, and select one node from each of $\ms T_1$ and $\ms T_2$, identifying the two selected nodes together. For the calculation of the second term in \eqref{e:var.mecke}, we focus on the configuration of nodes and edges defined by 
\begin{align}
\begin{split}  \label{e:zi.wi.zj'.wj'}
&(z_0, w_0) \to (z_1,w_1) \to \cdots \to (z_d,w_d) \to (0,u), \\
&(z_0, w_0) \to (z_1',w_1') \to \cdots \to (z_e',w_e') \to (0,u), 
\end{split}
\end{align}
for some $d, e \geq 0$, where $(z_0,w_0)$ represents a common node between $(P_1,\dots,P_{m-1})$ and $(Q_1,\dots, Q_{m-1})$. 
In the special case of $d=0$ or $e=0$, the point $(z_0,w_0)$ directly connects to $(0,u)$. 
If $d=e=0$, then \eqref{e:zi.wi.zj'.wj'} does not imply a cycle, whereas a cycle occurs only when either $d$ or $e$ is positive. Since the latter case is more fundamental, we assume  that $e\ge1$ in the below. 
To clarify the configuration in \eqref{e:zi.wi.zj'.wj'}, we assume, without loss of generality, that the path in the first line of \eqref{e:zi.wi.zj'.wj'} is taken from $\ms T_1$, while the path in the second line is taken from $\ms T_2$. Furthermore, assume there are $\ell_0$ leaves with no further children that are connected to $(z_0,w_0)$ by the paths, all consisting of nodes with marks higher than $(z_0,w_0)$. Similarly, for each $i=1,\dots,d$ (resp.~$j=1,\dots,e$), there are $\ell_i$ (resp.~$\ell_j'$) leaves with no further children that are connected to $(z_i,w_i)$ (resp.~$(z_j',w_j')$) through paths that do not include $(z_{i-1}, w_{i-1})$ (resp.~$(z_{j-1}', w_{j-1}')$) and consist entirely of nodes with marks higher than $(z_i,w_i)$ (resp.~$(z_j',w_j')$). 
Additionally, let $p$ denote the number of leaves with no further children that are connected to $(0,u)$ through paths containing neither $(z_d,w_d)$ nor $(z_e',w_e')$, where all nodes on these paths have marks higher than $(0,u)$.

Under this setup, there are three possible cases regarding the relation between $\ell$ and the other integers $p$, $\ell_i$, and $\ell_j$. 
First, if the node $(z_0,w_0)$ is not taken from the leaves of either $\ms{T}_1$ or $\ms{T}_2$, then 
\begin{equation}  \label{e:ell.others1}
2\ell = p + \ell_0 + \sum_{i=1}^d \ell_i + \sum_{j=1}^e \ell_j'. 
\end{equation}
Second, if $(z_0,w_0)$ corresponds to a leaf in only one of $\ms T_1$ or $\ms T_2$ but not both, then 
\begin{equation}  \label{e:ell.others2}
2\ell = p + \ell_0 + \sum_{i=1}^d \ell_i + \sum_{j=1}^e \ell_j' + 1. 
\end{equation}
Finally, if $(z_0,w_0)$ corresponds to a leaf in both $\ms{T}_1$ and $\ms{T}_2$, then $\ell_0=0$ and 
\begin{equation}  \label{e:ell.others3}
\ell = \frac{p}{2} + \sum_{i=1}^d \ell_i +1  = \frac{p}{2} + \sum_{j=1}^e \ell_j' +1. 
\end{equation}
It is important to note that if $\ell_0 \geq 2$, then either \eqref{e:ell.others1} or \eqref{e:ell.others2} applies. If $\ell_0 = 1$, \eqref{e:ell.others2} always applies. Lastly, \eqref{e:ell.others3} applies only when $\ell_0 = 0$. 

Before continuing, we want to point out how to integrate out the indicator for the presence of an edge over spatial coordinates under the toroidal metric: for $(x,u), (y,v) \in \T_n$ with $0 \le u \le v \le 1$, 
\begin{align}
\label{e:torus.bdd}
\int_{[0,n]} \one \big\{ (y,v) \to (x,u) \big\} \dif x &= \int_{[0,n]} \one \big\{  \ms{dist}_n (x,y) \le \beta u^{-\gamma}v^{\gamma-1}\big\} \dif x    \le 2\beta u^{-\gamma}v^{\gamma-1}. 
\end{align}
Clearly, the same bound is obtained even when $\dif x$ above is replaced with $\dif y$. 

Returning to the second term in \eqref{e:var.mecke}, applying the Mecke formula, and integrating out all variables except those in \eqref{e:zi.wi.zj'.wj'}, it remains to bound the expression 
\begin{align}
\begin{split}  \label{e:A.u}
A_u &:= \int_{w_0=u}^1 \int_{w_d=u}^{w_0} \int_{w_{d-1}=w_d}^{w_0} \cdots \int_{w_1=w_2} ^{w_0} \int_{w_e'=u}^{w_0} \int_{w_{e-1}'=w_e'}^{w_0} \cdots \int_{w_1'=w_2'}^{w_0}  u^{-p\gamma} \Big( \prod_{i=0}^d w_i^{-\ell_i\gamma}\Big)\Big( \prod_{j=1}^e (w_j')^{-\ell_j'\gamma}\Big) \\
&\quad \times \int_{z_0\in [0,n]} \prod_{i=1}^d\int_{z_i\in [0,n]} \prod_{j=1}^e \int_{z_j'\in [0,n]}  \one \big\{ (z_0, w_0) \to (z_1,w_1) \to \cdots \to (z_d,w_d) \to (0,u) \big\} \\
&\qquad \qquad \qquad \qquad\qquad \qquad\qquad \qquad \quad \times \one \big\{ (z_0, w_0) \to (z_1',w_1') \to \cdots \to (z_e',w_e') \to (0,u) \big\}. 
\end{split}
\end{align}
More precisely, it may be necessary to include additional logarithmic factors in the integrands in \eqref{e:A.u}. For instance, $u^{-p\gamma} (\log u^{-1})^s$ could appear instead of $u^{-p\gamma}$ for some $s=1,2,\dots$. In such cases, we bound $(\log u^{-1})^s$ by $Cu^{-\eta}$ for a sufficiently small $\eta > 0$. To simplify the presentation, however, these logarithmic factors are omitted throughout the remainder of the proof. Note further that $A_u$ depends on $n$, owing to the integration domain $[0,n]$ (and the related toroidal metric). However, employing bounds of the form \eqref{e:torus.bdd}, the dependence on $n$ can be immediately removed. Consequently, the dependence on $n$ is suppressed in \eqref{e:A.u} for simplicity, and this convention will be applied  throughout the proof. 

By removing the condition $(z_0,w_0)\to (z_1',w_1')$ and calculating the integral  \eqref{e:A.u} sequentially over all spatial coordinates, 
\begin{align}
&\int_{z_0\in [0,n]} \prod_{i=1}^d\int_{z_i\in [0,n]} \prod_{j=1}^e \int_{z_j'\in [0,n]} \one \big\{ (z_0, w_0) \to (z_1,w_1) \to \cdots \to (z_d,w_d) \to (0,u) \big\} \label{e:remove.youngest.edge} \\
&\qquad \qquad \qquad \qquad\qquad \qquad\qquad \qquad \quad \times \one \big\{ (z_0, w_0) \to (z_1',w_1') \to \cdots \to (z_e',w_e') \to (0,u) \big\} \notag \\
&\le C u^{-2\gamma} w_0^{\gamma-1} \Big( \prod_{i=1}^d w_i^{-1} \Big) (w_1')^{\gamma-1} \Big\{ \prod_{j=2}^e (w_j')^{-1} \Big\}. \notag 
\end{align}
Substituting this bound into $A_u$, we have 
\begin{align}
\begin{split}  \label{e:Au.var.bound}
A_u &\le Cu^{-(p+2)\gamma} \int_{w_0=u}^1 w_0^{-1-(\ell_0-1)\gamma} \int_{w_d=u}^{w_0} w_d^{-1-\ell_d\gamma}   \int_{w_{d-1}=w_d}^{w_0} w_{d-1}^{-1-\ell_{d-1}\gamma}  \times \cdots \times \int_{w_1=w_2} ^{w_0} w_1^{-1-\ell_1\gamma}  \\
&\quad \times \int_{w_e'=u}^{w_0} (w_e')^{-1-\ell_e'\gamma} \int_{w_{e-1}'=w_e'}^{w_0} (w_{e-1}')^{-1-\ell_{e-1}'\gamma} \times \cdots \times \int_{w_{2}'=w_3'}^{w_0} (w_{2}')^{-1-\ell_{2}'\gamma} \int_{w_1'=w_2'}^{w_0} (w_1')^{-1-(\ell_1'-1)\gamma}. 
\end{split}
\end{align}
The innermost integral can be bounded as 
\begin{equation} \label{e:inner.bound4}
\int_{w_1'=w_2'}^{w_0} (w_1')^{-1-(\ell_1'-1)\gamma} \le \begin{cases}
C(w_2')^{-(\ell_1'-1)\gamma} & \text{if } \ell_1'\ge1, \\[5pt]
Cw_0^\gamma & \text{if } \ell_1'=0. 
\end{cases}
\end{equation}
(If $\ell_1'=1$, the upper bound takes a logarithmic form, but as noted earlier, such logarithmic terms are systematically ignored).  Suppose first that $\ell_1'\ge1$. Then, by applying the first bound in \eqref{e:inner.bound4} and sequentially integrating the resulting expression, we obtain 
\begin{equation}  \label{e:A_n.ell_0}
A_u \le C u^{-(p+1+\sum_{i=1}^d \ell_i + \sum_{j=1}^e \ell_j')\gamma} \int_{w_0=u}^1 w_0^{-1-(\ell_0-1)\gamma} \le \begin{cases}
C u^{-(p+\ell_0+\sum_{i=1}^d \ell_i + \sum_{j=1}^e \ell_j')\gamma} & \text{if } \ell_0\ge1, \\[5pt]
Cu^{-(p+1+\sum_{i=1}^d \ell_i + \sum_{j=1}^e \ell_j')\gamma} & \text{if } \ell_0=0. 
\end{cases}
\end{equation}
Recall that if $\ell_0\ge1$, either \eqref{e:ell.others1} or \eqref{e:ell.others2} holds. In both cases, the first term in \eqref{e:A_n.ell_0} is further bounded by $Cu^{-2\ell\gamma}$. If $\ell_0=0$, \eqref{e:ell.others3} always holds and the second term in \eqref{e:A_n.ell_0} is again upper bounded by $Cu^{-2\ell\gamma}$. Throughout our discussion, logarithmic terms have been systematically ignored. However, whenever they appear, they can be bounded by $Cu^{-\eta}$ for sufficiently small $\eta > 0$. From this perspective, $A_u$ is ultimately bounded by $Cu^{-2\ell\gamma-\eta}$, as required. 

Let us return to \eqref{e:Au.var.bound} and suppose next that $\ell_1'=0$. Then, we use the second bound in \eqref{e:inner.bound4}. Similarly to the last case, we conclude that 
\begin{equation}  \label{e:A_n.ell_0.2}
A_u \le C u^{-(p+2+\sum_{i=1}^d \ell_i + \sum_{j=1}^e \ell_j')\gamma} \int_{w_0=u}^1 w_0^{-1-(\ell_0-2)\gamma} \le \begin{cases}
C u^{-(p+\ell_0+\sum_{i=1}^d \ell_i + \sum_{j=1}^e \ell_j')\gamma} & \text{if } \ell_0\ge2, \\[5pt]
Cu^{-(p+2+\sum_{i=1}^d \ell_i + \sum_{j=1}^e \ell_j')\gamma} & \text{if } \ell_0 \in \{ 0,1 \}. 
\end{cases}
\end{equation}
Once again, it can be shown that, for any $\ell_0$, the final term in \eqref{e:A_n.ell_0.2} is bounded by $Cu^{-2\ell\gamma}$. More precisely, due to the possible contribution of logarithmic terms, we have $A_u \leq Cu^{-2\ell\gamma - \eta}$ for sufficiently small $\eta > 0$.
\medskip

\noindent \underline{\textit{Proof of $(iii)$}}: It follows from the Mecke formula that 
\begin{align}
\begin{split}  \label{e:Mecke.3rd.moment}
\E \big[ \inDt(u)^3 \big] &=  \mu_{n,\ms{T}}(u)^3 \\
&+ \E \bigg[ \sum_{(P_1,\dots, P_{m-1}) \in (\Pn)_{\neq}^{m-1}} \sum_{(Q_1,\dots, Q_{m-1}) \in (\Pn)_{\neq}^{m-1}} \sum_{(R_1,\dots, R_{m-1}) \in (\Pn)_{\neq}^{m-1}} \hspace{-10pt} h\big( (0,u), P_1, \dots, P_{m-1} \big) \\ 
&\qquad \qquad \qquad \qquad \times h\big( (0,u), Q_1, \dots, Q_{m-1} \big)\, h\big( (0,u), R_1, \dots, R_{m-1} \big) \, \\
&\qquad \qquad \qquad \qquad \times  \one \big\{  \text{there are at least one common points between }  (P_1,\dots, P_{m-1}), \\
&\qquad \qquad \qquad \qquad \qquad (Q_1,\dots, Q_{m-1}) \text{ and } (R_1,\dots, R_{m-1}) \big\}\bigg]. 
\end{split}
\end{align}
By Proposition \ref{p:exp.var.sub-tree} $(i)$, we observe that 
\begin{equation}  \label{e:main.3rd.moment}
\mu_{n,\ms{T}}(u)^3 \le (C u^{-\ell \gamma -\eta/3})^3 =C u^{-3\ell\gamma-\eta}. \end{equation}
For the second term in \eqref{e:Mecke.3rd.moment}, whenever a cyclic structure arises as in \eqref{e:zi.wi.zj'.wj'}, we will collapse it   by removing an edge formed by the youngest pair of nodes, such as $(z_0, w_0) \to (z_1', w_1')$ in \eqref{e:remove.youngest.edge}. As implied by  the proof of Proposition \ref{p:exp.var.sub-tree} $(ii)$, the removal of such an edge will not lead to a growth rate exceeding \eqref{e:main.3rd.moment}. Specifically, the removal of this edge introduces a new leaf, such as $(z_1, w_1)$ in \eqref{e:remove.youngest.edge}, whose mark is, however, lower than that of the common node as $(z_0, w_0)$ in \eqref{e:remove.youngest.edge}. This constraint ensures that the removal of an edge does not increase the growth rate of the second term in \eqref{e:Mecke.3rd.moment} beyond that of \eqref{e:main.3rd.moment}. 

Finally, the remaining bounds for $\mu_{\ms{T}}(u)$, $\text{Var}(\inD(u))$, and $\E \big[ \inD(u)^3 \big]$ follow directly by repeating the proofs of $(i)$–$(iii)$ and applying the monotone convergence theorem.
\end{proof}
\bigskip

\section{Newman's inequality}   \label{app:C}

Finally, we present the so-called Newman's inequality, which is one of the  main ingredients for the proof of Theorem \ref{t:CLT.sub-tree}. 

\begin{theorem}[\cite{newman:1980}]  \label{t:newman.thm}
Let $(X_n)_{n\in \N}$ be positively associated random variables. Let $\varphi_{(X_1,\dots,X_n)}(u_1,\dots,u_n) = \E\big[e^{i\sum_{j=1}^n u_j X_j}\big]$ be the characteristic function. Then, for all $n\ge1$ and $u_1,\dots, u_n\in \R$, 
$$
\Big|\, \varphi_{(X_1,\dots,X_n)}(u_1,\dots,u_n) -\prod_{j=1}^{m_n} \varphi_{X_j}(u_j)   \, \Big|\le \sum_{j=1}^{n-1}\sum_{k=j+1}^n |u_ju_k| \ms{Cov}(X_j,X_k). 
$$
\end{theorem}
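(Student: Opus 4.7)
The plan is to combine a standard telescoping argument with Newman's covariance bound for smooth functions of positively associated random variables.

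First, I would introduce the hybrid quantities
\[
\psi_k(u_1,\dots,u_n) := \E\Big[\prod_{j=1}^k e^{iu_jX_j}\Big]\prod_{j=k+1}^n \varphi_{X_j}(u_j), \qquad k=0,1,\dots,n,
\]
so that $\psi_0=\prod_{j=1}^n\varphi_{X_j}(u_j)$ and $\psi_n=\varphi_{(X_1,\dots,X_n)}(u_1,\dots,u_n)$. Writing the left-hand side as the telescoping sum $\sum_{k=1}^n(\psi_k-\psi_{k-1})$ and using $|\varphi_{X_j}(u_j)|\le 1$, the triangle inequality gives
\[
\Big|\varphi_{(X_1,\dots,X_n)}-\prod_{j=1}^n\varphi_{X_j}\Big|\le\sum_{k=1}^n\Big|\E\Big[F_{k-1}\cdot e^{iu_kX_k}\Big]-\E[F_{k-1}]\,\E[e^{iu_kX_k}]\Big|,
\]
where $F_{k-1}:=\prod_{j=1}^{k-1}e^{iu_jX_j}$. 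This reduces the problem to bounding each (complex) covariance $\mathrm{Cov}(F_{k-1},e^{iu_kX_k})$.

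Second, I would establish the pointwise estimate
\[
\big|\mathrm{Cov}(F_{k-1},e^{iu_kX_k})\big|\le\sum_{j=1}^{k-1}|u_j|\,|u_k|\,\mathrm{Cov}(X_j,X_k).
\]
For this, split the complex exponentials into their real and imaginary parts; each of $\cos(u\cdot)$ and $\sin(u\cdot)$ is $C^1$ with derivative bounded by $|u|$. For positively associated $X,Y$ and $C^1$ real-valued $f,g$ with bounded derivatives, the Hoeffding-type identity
\[
\mathrm{Cov}(f(X),g(Y))=\iint f'(s)g'(t)\big(\P(X>s,Y>t)-\P(X>s)\P(Y>t)\big)\,\mathrm{d}s\,\mathrm{d}t
\]
combined with the non-negativity of its kernel under positive association yields $|\mathrm{Cov}(f(X),g(Y))|\le\|f'\|_\infty\|g'\|_\infty\mathrm{Cov}(X,Y)$. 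The multivariate extension, where the first argument depends on $X_1,\dots,X_{k-1}$, follows by a further internal telescoping over the coordinates of $F_{k-1}$, yielding $\sum_j\|\partial_jF_{k-1}\|_\infty|u_k|\mathrm{Cov}(X_j,X_k)$. Since $F_{k-1}$ is a product of unit-modulus exponentials, $|\partial_jF_{k-1}|\equiv|u_j|$, giving the displayed bound.

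Third, combining these two steps,
\[
\Big|\varphi_{(X_1,\dots,X_n)}-\prod_{j=1}^n\varphi_{X_j}\Big|\le\sum_{k=1}^n\sum_{j=1}^{k-1}|u_ju_k|\mathrm{Cov}(X_j,X_k)=\sum_{j=1}^{n-1}\sum_{k=j+1}^n|u_ju_k|\mathrm{Cov}(X_j,X_k),
\]
which is exactly the claimed inequality. The main obstacle is justifying the multivariate complex covariance bound cleanly: positive association is defined for real, coordinatewise monotone functions, so one must carefully decompose the complex exponentials into real and imaginary components (which are not monotone individually) and exploit the non-negative kernel in the Hoeffding identity, rather than applying the definition of positive association directly to $F_{k-1}$.
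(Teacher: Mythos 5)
The paper does not prove this result; it only cites \cite{newman:1980}, so there is no in-text argument to compare against. Your overall strategy---telescoping through the hybrid quantities $\psi_k$ and reducing the problem to bounding $\mathrm{Cov}(F_{k-1}, e^{iu_kX_k})$---is the standard one and is correct, but the crucial second step is not adequately justified. You claim the multivariate covariance bound ``follows by a further internal telescoping over the coordinates of $F_{k-1}$,'' yet a telescoping over the factors of $F_{k-1}$ does not reduce $\mathrm{Cov}\bigl(\prod_{j<k} f_j(X_j),\, g(X_k)\bigr)$ to a sum of bivariate covariances $\mathrm{Cov}(f_j(X_j), g(X_k))$: the covariance $\mathrm{Cov}(AB,C)$ does not decompose in terms of $\mathrm{Cov}(A,C)$ and $\mathrm{Cov}(B,C)$, and the Hoeffding identity you invoke is strictly bivariate with no analogue of the required product form.

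The step you need is Newman's multivariate covariance lemma, and its standard proof is not a telescoping but a monotone decomposition. For real-valued $C^1$ functions $f:\R^{k-1}\to\R$ and $g:\R\to\R$ with $\|\partial_j f\|_\infty\le c_j$ and $\|g'\|_\infty\le c_k$, set
\[
f^{\pm}(x_1,\dots,x_{k-1}) := \tfrac12\Bigl(\textstyle\sum_{j}c_jx_j \pm f\Bigr), \qquad g^{\pm}(y) := \tfrac12\bigl(c_k y \pm g(y)\bigr).
\]
These are coordinatewise non-decreasing, so positive association gives $\mathrm{Cov}(f^{\pm},g^{\pm})\ge 0$, while $f=f^+-f^-$, $g=g^+-g^-$, $f^++f^-=\sum_j c_jx_j$ and $g^++g^-=c_ky$. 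Hence
\[
|\mathrm{Cov}(f,g)| \le \mathrm{Cov}(f^++f^-,\,g^++g^-) = \sum_{j=1}^{k-1}c_jc_k\,\mathrm{Cov}(X_j,X_k).
\]
Applying this to the real and imaginary parts of $F_{k-1}$ and of $e^{iu_kX_k}$, for which $c_j=|u_j|$ and $c_k=|u_k|$, yields the bound you assert, after which your telescoping completes the proof. So the skeleton of your argument is right, but you must replace the ``internal telescoping'' by this $f^{\pm}$ decomposition (or some equivalent) to make the key lemma rigorous.
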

\end{appendix}

\section*{Acknowledgments}
This research was partially carried out during a visit of TO to Aarhus University funded by AUFF visit grant
AUFF-E-2023-6-38. Part of this research was conducted while CH was in Paris, supported by the Programme d’invitations internationales scientifiques, campagne 2025 from Universit\'e Paris Cit\'e.
TO was partially supported by the AFOSR grant FA9550-22-1-0238 at Purdue University. CH was supported by a research grant (VIL69126) from VILLUM FONDEN.

\bibliographystyle{abbrv}
\bibliography{lit.bib}
\end{document}